\documentclass[11pt,reqno]{amsart}
\usepackage{color}
\usepackage{amsmath,amssymb,amsthm,amsfonts}
\usepackage{hyperref}
\usepackage{color}
\usepackage{setspace}
\usepackage{placeins}
\makeatletter

\newcommand{\Rmnum}[1]{\expandafter\@slowromancap\romannumeral #1@}
\makeatother

\newtheorem{thm}{Theorem}[section]

\newtheorem{lemma}[thm]{Lemma}
\newtheorem{remark}{Remark}[section]
\newtheorem{theorem}[thm]{Theorem}

\usepackage[numbers,sort&compress]{natbib}
\usepackage{graphicx}
\usepackage{caption}
\usepackage{subcaption}
\usepackage{tikz}
\usepackage{multirow}

\usepackage{lineno}

\begin{document}

\author{Huaizhi Cao}
\address{School of Mathematics, South China University of Technology,
Guangzhou 510640, China}
\email{mahzcao@163.com}

\author{Jiawei Chu$^*$}
\address{School of Mathematics, South China University of Technology,
Guangzhou 510640, China}
\email{majwchu@163.com}

\author{Haiyang Jin}
\address{School of Mathematics, South China University of Technology,
Guangzhou 510640, China}
\email{mahyjin@scut.edu.cn}

\title[A parabolic-elliptic forager-exploiter model]{Parabolic-elliptic reduction suppresses Hopf bifurcation in a forager-exploiter system with cascade taxis}

\begin{abstract}
We study a parabolic-elliptic forager-exploiter model describing the cascade interaction between two species through a shared environmental resource with a constant renewal rate. We first establish the global existence and uniform boundedness of classical solutions in arbitrary dimensions {\color{black}for large initial population data and large taxis sensitivity coefficients. We then investigate the long-time behavior of solutions.} When the resource renewal rate is small, all solutions will converge exponentially to the unique constant steady state. Furthermore, for large resource renewal rates, the  parabolic-elliptic system does not undergo Hopf bifurcation, which is in sharp contrast to the corresponding fully parabolic case, where temporal oscillations arise via Hopf bifurcation.
\end{abstract}

\subjclass[2020]{35A01, 35B40, 35B44, 35K57, 35Q92}

\keywords{Parabolic-elliptic system; Forager–exploiter model; Global boundedness; Global stability; Spectral analysis}
\thanks{$^*$Corresponding author: Jiawei Chu}

\maketitle

\numberwithin{equation}{section}

\section{Introduction and Main Results}
\subsection{Introduction}
{\color{black}The classical prey-taxis model describes direct predator–prey interactions and fails to generate spatial patterns \cite{Lee-JBD-2009,JinWang-JDE-2017,Wu-JDE-2016}.
Moreover, it cannot capture indirect resource-searching behavior, where one species acquires information about resource through another species.} Field observations of Alaskan seabirds reveal a typical two-tiered ecological interplay \cite{7}: kittiwakes (foragers) actively aggregate at high-resource zones, while shearwaters (exploiters) indirectly locate food by following kittiwake flocks. To mathematically describe this cascaded foraging interaction between two species sharing a common resource, Tania et al. \cite{9} introduced the forager–exploiter reaction-diffusion model as follows:
\begin{equation}\label{system*}
\begin{cases}
u_t=\Delta u-\chi_1\nabla\cdot(u\nabla w),&x\in\Omega,~t>0,\\
v_t=\Delta v-\chi_2\nabla\cdot(v\nabla u),&x\in\Omega,~t>0,\\
w_t=d\Delta w-\lambda(u+v)w-\mu w+r(x,t),&x\in\Omega,~t>0,\\
\nabla u\cdot\nu=\nabla v\cdot\nu=\nabla w\cdot \nu=0,&x\in\partial\Omega,~t>0,\\
(u,v, w)(x,0)=(u_0, v_0, w_0)(x), &x\in\Omega,
\end{cases}
\end{equation}
where $\Omega\subset\mathbb{R}^n(n\geq 1)$ is a bounded domain with smooth boundary,  the functions $u=u(x,t),v=v(x,t),w=w(x,t)$ denote the densities of the forager population, the exploiter population and the resource at position $x$ and time $t$, respectively.  The taxis terms $-\chi_1\nabla\cdot(u\nabla w)$ and $-\chi_2\nabla\cdot (v\nabla u)$ represent a cascade movement mechanism: foragers move toward areas with higher resource density, whereas exploiters move toward areas with higher forager density. The positive parameters $\chi_1,\chi_2$ are taxis sensitivity coefficients. The external resource is replenished through the nonnegative function $r(x,t)\geq 0$. The homogeneous Neumann boundary conditions describe no-flux across the boundary and all other parameters are positive constants.

The cascade taxis structure makes the mathematical analysis more challenging, especially in higher spatial dimensions.
Most existing analytical results for \eqref{system*} concern global existence, boundedness and the global stability of constant steady states, but are restricted either to one dimension or to higher-dimensional settings ($n\geq2$) under suitable smallness assumptions. Specifically, in one dimension, the global boundedness and global stabilization of constant steady state for the system \eqref{system*} with constant resource renewal rates were first established by Tao and Winkler \cite{11}. In higher dimensions, the existence of generalized global solutions to \eqref{system*} was obtained under an explicit relation between $w_0$ and $r(x,t)$, while global stability of constant steady state requires additional temporal decay conditions on $r(x,t)$ \cite{16}. Later, classical solvability was established only under smallness assumptions on initial data $u_0, w_0$, the resource term $r(x,t)$, or the taxis sensitivity coefficients $\chi_1,\chi_2$ \cite{13}. Under such smallness or decay assumptions, all solutions converge to the constant steady state and no spatiotemporal patterns occur. Taken together, the majority of existing analytical contributions to \eqref{system*} focus on global well-posedness and the asymptotic stability of constant equilibria, while rigorous theories for pattern formation and periodic dynamics are far less complete. We refer to \cite{2,3,4,13,14,15,17} and the references therein for more comprehensive summaries of analytical results concerning the full parabolic system \eqref{system*}.
In contrast, numerical simulations in \cite{9} demonstrated that a sufficiently large constant resource renewal rate (i.e., $r(x,t)\equiv r>0$ with large constant $r$) induces oscillatory spatiotemporal patterns. Very recently, Cao et al. \cite{CaoChuJin-2026} further provided rigorous theoretical verification for the global existence and asymptotic stability of time-periodic solutions to the full parabolic system \eqref{system*} in arbitrary dimensions under both homogeneous/heterogeneous resource environments, and clarified how distinct renewal mechanisms shape periodic dynamics. However, these stability results all rely on the prior existence of global classical solutions. To date, rigorous results on global well-posedness of \eqref{system*} for large initial population data or large taxis sensitivity coefficients are still lacking, which substantially limits the applicability of the existing theory.

To overcome such theoretical limitations while remaining biologically relevant, we consider the following parabolic‑elliptic variant of \eqref{system*}:
\begin{equation}\label{system}
\begin{cases}
u_t=\Delta u-\chi_1\nabla\cdot(u\nabla w),&x\in\Omega,~t>0,\\
v_t=\Delta v-\chi_2\nabla\cdot(v\nabla u),&x\in\Omega,~t>0,\\
0=d\Delta w-\lambda(u+v)w-\mu w+r(x,t),&x\in\Omega,~t>0,\\
\nabla u\cdot\nu=\nabla v\cdot\nu=\nabla w\cdot \nu=0,&x\in\partial\Omega,~t>0,\\
(u,v)(x,0)=(u_0, v_0)(x), &x\in\Omega.
\end{cases}
\end{equation}
In fact, in many biological and ecological situations, nutrients or chemical cues diffuse much faster than the movement of individuals.  This gives rise to the well‑known quasi‑stationary approximation.
Such a parabolic-elliptic reduction has been extensively used in chemotaxis-type systems because the elliptic equation enjoys stronger regularizing effects and often facilitates the analysis of global classical solutions (see, e.g.,  \cite{Bellomo-Winkler-2017-TAMS, Perthame-2007, Luckhaus-1992-TAMS, Nagai-1995, TaoWinkler-JDE-2019-PE}).  However, this reduction also introduces a nonlocal coupling through the elliptic operator and therefore may change the structure of the linearized operator from that of the corresponding fully parabolic system. As a consequence, the dynamics of the resulting parabolic–elliptic system may substantially differ from those of the fully parabolic system.  Motivated by these considerations, we address the following two questions:
\begin{itemize}
\item whether the parabolic-elliptic structure guarantees global classical solvability for the two-species forager–exploiter systems in higher dimensions
{\color{black}for large initial population data and taxis sensitivity coefficients;}
\item how the parabolic-elliptic structure affects the {\color{black}dynamics} compared with the fully parabolic system.
\end{itemize}
Since the heterogeneous cases can be treated by arguments essentially identical to those in \cite{CaoChuJin-2026}, we restrict ourselves to homogeneous environments throughout this paper, i.e., $r(x,t)\equiv r$ with $r$ being a positive constant. Moreover, we assume that $\Omega\subset\mathbb{R}^n$ ($n\geq 1$) is a bounded domain with a smooth boundary, and impose the following basic hypothesis on initial data:
\begin{equation}\label{IC}
   u_0\in W^{1,\infty}(\Omega), v_0\in C^0(\overline{\Omega}) ~\text{with}~ u_0\geq,\not\equiv0,v_0\geq,\not\equiv0.
\end{equation}
\subsection{Main results}
In homogeneous environments, the fully parabolic system \eqref{system*} and the parabolic-elliptic system \eqref{system} exhibit dramatically different dynamical behaviors. Before presenting our results, we note that {\color{black}for any fixed $u_0$ and $v_0$,} \eqref{system} admits a unique positive constant steady state $(\bar{u}_0,\bar{v}_0,w_c)$ with
$w_c:=\frac{r}{\lambda(\bar{u}_0+\bar{v}_0)+\mu},$
where the notation $\bar{f}:=\frac{1}{|\Omega|}\int_\Omega f dx$.  

\vspace{1.5mm}
Our first theorem establishes the global classical solvability and the global stability of the constant steady state $(\bar{u}_0,\bar{v}_0,w_c)$.
\vspace{2mm}

\begin{theorem}\label{global_tau0_app}
Let $\Omega \subset \mathbb{R}^n(n \ge 1)$ be a bounded domain with a smooth boundary, and \eqref{IC} hold.  Then there exists a constant $r_0>0$ such that if $0<r\leq r_0$, the system \eqref{system} admits a unique global nonnegative classical solution $(u,v,w)\in  [C^0(\overline{\Omega} \times [0,\infty)) \cap C^{2,1}(\overline{\Omega} \times (0,\infty))]^2\times  C^{2,0}(\overline{\Omega} \times (0,\infty))$ with a uniform-in-time bound in the sense that 
\begin{equation*}
\|u(\cdot, t)\|_{W^{1,\infty}} + \|v(\cdot, t)\|_{L^\infty} + \|w(\cdot, t)\|_{W^{1,\infty}} \le C_0 \quad \forall t > 0.
\end{equation*}
Moreover, we have 
$$\|u(\cdot,t)-\bar{u}_0\|_{L^\infty}+\|v(\cdot,t)-\bar{v}_0\|_{L^\infty}+\|w(\cdot,t)-w_c\|_{L^\infty}\leq C_1 e^{-\kappa_0 t}, \ \forall t\geq1,$$
where the constants $C_0, C_1$ and $\kappa_0$ are independent of $t$.
\end{theorem}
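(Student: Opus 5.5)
The plan is to exploit the elliptic equation for $w$: every bound on $w$ scales linearly in $r$, so for small $r$ the taxis term $\chi_1\nabla\cdot(u\nabla w)$ is a small perturbation, whatever the sizes of $u_0$, $v_0$, $\chi_1$, $\chi_2$.

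\textbf{Local existence and bounds on $w$.} I first invoke standard fixed-point arguments, as in \cite{11,TaoWinkler-JDE-2019-PE}, to obtain a unique local classical solution on $[0,T_{\max})$, with the extensibility criterion
\[
\limsup_{t\to T_{\max}}\left(\|u\|_{W^{1,\infty}}+\|v\|_{L^\infty}\right)=\infty \quad\text{if } T_{\max}<\infty .
\]
Mass conservation gives $\int u=\int u_0$ and $\int v=\int v_0$. Applying the comparison principle to the elliptic equation gives $0\le w\le r/\mu$. Next, write $w=(-d\Delta+\mu+1)^{-1}\big(r+w-\lambda(u+v)w\big)$. The right-hand side is bounded in $L^p$ by $Cr\,(1+\|u\|_{L^p}+\|v\|_{L^p})$. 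Elliptic $W^{2,p}$ regularity with $p>n$ then yields
\[
\|w\|_{W^{1,\infty}}\le Cr\left(1+\|u\|_{L^p}+\|v\|_{L^p}\right).
\]
I also keep the identity $\Delta w=\frac1d\big(\lambda(u+v)w+\mu w-r\big)$, which gives $\|\Delta w\|_{L^\infty}\le Cr(1+\|u\|_{L^\infty}+\|v\|_{L^\infty})$.

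\textbf{Bootstrap for boundedness.} I work on the maximal interval where
\[
\|u(t)\|_{W^{1,\infty}}\le 2M_1,\qquad \|v(t)\|_{L^\infty}\le 2M_2 ,
\]
with $M_1,M_2$ chosen from the data and from the heat-semigroup constants. Consider the $u$-equation via the Neumann heat semigroup:
\[
u(t)=e^{t\Delta}u_0-\chi_1\int_0^t e^{(t-s)\Delta}\nabla\cdot\big(u\nabla w\big)\,ds .
\]
The $L^p$–$L^q$ and gradient estimates of $e^{t\Delta}$ are used here together with the exponential decay on mean-zero functions. The integral term is then bounded in $W^{1,\infty}$, more precisely in $W^{1,q}$ for $q>n$, by $C\chi_1 r\,M_1(1+M_1+M_2)$. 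This requires the divergence-form estimate combined with the bounds on $\nabla w$ and $\Delta w$.

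For $v$, the same semigroup argument applied to $v_t=\Delta v-\chi_2\nabla\cdot(v\nabla u)$ gives $\|v\|_{L^\infty}\le C(\|v_0\|_{L^\infty}+\chi_2 M_2\sup\|\nabla u\|_{L^q})$. This is where the main obstacle lies: $\chi_2$ is large and $\nabla u$ is not small, so a naive bootstrap fails.

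My first attempt will split $u=\bar u_0+(u-\bar u_0)$ and show that $\|\nabla u(t)\|_{L^\infty}$ decays. The bound $\|\nabla u(t)\|_{L^\infty}\le C e^{-\lambda_1 t/2}\|u_0\|_{W^{1,\infty}}+Cr(\dots)$ is valid as long as the bootstrap holds. Then $\int_0^\infty\|\nabla u\|_{L^q}ds$ is finite, and it is bounded by a constant depending only on $u_0$ plus $O(r)$. If the decaying-gradient approach proves insufficient, I will fall back on an $L^p$ energy estimate for $v$. That estimate uses $\Delta u=u_t+\chi_1\nabla\cdot(u\nabla w)$, which is problematic, so the decay route is preferred.

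A Gronwall-type semigroup estimate (or Moser iteration) with this integrable coefficient then gives $\|v\|_{L^\infty}\le M_2$ independently of $r$. Finally, choosing $r_0$ small closes the bootstrap for $u$.

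\textbf{Exponential convergence.} With uniform bounds in hand, the estimates above show that $\|u-\bar u_0\|_{W^{1,q}}\le Ce^{-\kappa t}$, since the forcing is $O(r)$ times the perturbation. This holds because $\nabla w$ itself is controlled by $Cr\|u+v-\bar u_0-\bar v_0\|_{L^p}$, using that $w_c$ solves the constant problem.

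Next, for $v$ I use the semigroup decay on $v-\bar v_0$ with the forcing $\chi_2\nabla\cdot(v\nabla u)$, which decays exponentially. This gives $\|v-\bar v_0\|_{L^\infty}\le Ce^{-\kappa t}$. Because $v$ enters the estimate for $\nabla w$, I treat the $u$ and $v$ decay as a coupled system. Smallness of $r$ makes the feedback from $v$ into $u$ perturbative, while the $u\to v$ coupling is one-directional to leading order.

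For $w$, I subtract the equation for $w_c$:
\[
-d\Delta(w-w_c)+\big(\lambda(u+v)+\mu\big)(w-w_c)=-\lambda w_c(u+v-\bar u_0-\bar v_0).
\]
The maximum principle then gives $\|w-w_c\|_{L^\infty}\le Ce^{-\kappa_0 t}$.
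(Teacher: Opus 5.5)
\textbf{The gap is in the step you flag as the main obstacle: the uniform bound for $v$.} You claim that $\int_0^\infty\|\nabla u(s)\|_{L^q}\,ds$ is finite and bounded by $C(u_0)+O(r)$. This is not true at that stage of the argument.

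Only the linear part $\nabla e^{t\Delta}u_0$ decays. The Duhamel contribution of $\chi_1\nabla\cdot(u\nabla w)$ is merely $O(r)$ \emph{pointwise in time}. Under your bootstrap hypotheses you only know $\|\nabla w\|_{L^\infty}+\|\Delta w\|_{L^q}\le Cr(1+M_1+M_2)$, not that $\nabla w\to0$. Decay of $\nabla w$ would require $u+v\to\bar u_0+\bar v_0$, which is the convergence statement you prove afterwards. Using $\|\nabla w\|\le Cr\|u+v-\bar u_0-\bar v_0\|_{L^p}$ here would therefore be circular.

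Consequently $\int_0^t\|\nabla u\|_{L^q}\,ds$ grows like $C+Crt$. A Gronwall argument whose bound is the exponential of the integrated coefficient then gives only $e^{Crt}$-type bounds. The same holds for a Moser iteration built on $\frac{d}{dt}\int v^p\le A(t)\int v^p$. Neither closes the bootstrap uniformly in time.

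\textbf{Repair within your semigroup framework.} This needs an idea you did not state: do not ask for integrability, but exploit the factor $e^{-\lambda_1(t-s)}$ in $\|e^{\tau\Delta}\nabla\cdot\varphi\|_{L^\infty}\le C(1+\tau^{-\gamma})e^{-\lambda_1\tau}\|\varphi\|_{L^q}$.
\begin{itemize}
\item Split $\|\nabla u(s)\|_{L^q}\le C_*e^{-\lambda_1 s}+\varepsilon_r$.
\item Choose $T_0$, depending only on the data, with $C\chi_2C_*e^{-\lambda_1T_0}\int_0^\infty(1+\tau^{-\gamma})e^{-\lambda_1\tau}d\tau\le\frac14$.
\item Bound $v$ on $[0,T_0]$ by a singular Gronwall lemma.
\item Restart Duhamel at $T_0$ and absorb the integral term for $t\ge T_0$. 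This is possible once $r$ is small enough that $\varepsilon_r$ contributes at most another $\frac14$.
\end{itemize}
This yields $M_2$ independent of $r$, after which $r_0$ is chosen depending on $M_1,M_2$.

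\textbf{How the paper avoids this.} It first obtains $\|u\|_{L^\infty}\le C(r^{(n+2)/2}+1)$ independently of $v$ by Moser iteration. The key there is that $-\int u^q\Delta w\le\frac rd\int u^q$, because of the sign of $\lambda(u+v)w+\mu w$.

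It then runs the $L^p$ energy estimate for $v$, which you dismissed unnecessarily, since no $\Delta u$ is needed. The cross term $\chi_2p(p-1)\int v^{p-1}\nabla v\cdot\nabla u$ is handled by Young's inequality with $\|\nabla u\|_{L^\infty}^2$. The resulting $A(t)\int v^p$ is absorbed by the dissipation via the Gagliardo--Nirenberg inequality and the conserved mass $\int v=\int v_0$. This gives $\frac{d}{dt}\int v^p+\int v^p\le C(A^{(np+2-n)/2}+A)$.

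So only $\sup_t\|\nabla u\|_{L^\infty}\le 2c_0(\|\nabla u_0\|_{L^\infty}+1)$ is required, which holds for $r\le r_1(M)$; no decay is needed.

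\textbf{Convergence part.} Once boundedness is available, your convergence argument is sound. The weighted-in-time Duhamel estimates, with small $v\to u$ feedback and one-directional $u\to v$ coupling, are a legitimate alternative to the paper's $L^2$ energy functional plus H\"older interpolation.
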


\begin{remark}
\em{Theorem \ref{global_tau0_app} establishes the global existence and uniform boundedness of classical solutions for \eqref{system} without the smallness assumptions on the initial data or $\chi_1,\chi_2$ imposed in \cite{13}. In fact, when $r=r(x,t) \geq,\not\equiv0$ is non-constant, and $r(x,t)\in C^{\alpha,\frac{\alpha}{2}}(\overline{\Omega}\times[0,\infty))$  with $\alpha\in (0,1)$ is globally bounded, one can obtain the same global existence and boundedness results for \eqref{system}.} 
\end{remark}

Theorem \ref{global_tau0_app} rules out any other nontrivial long-time behaviors when $r$ is small. A natural question is therefore whether more complicated dynamics may emerge when the resource renewal rate becomes large.  To answer this question, we first linearize the system \eqref{system} at $(\bar{u}_0,\bar{v}_0,w_c)$ to get the following linearized problem:
\begin{equation}\label{le*0}
\begin{cases}
(\psi_1)_t=\Delta \psi_1-\chi_1\bar{u}_0\Delta \psi_3,&x\in\Omega,t>0,\\
(\psi_2)_t=\Delta\psi_2-\chi_2\bar{v}_0\Delta\psi_1,&x\in\Omega,t>0,\\
0=d\Delta \psi_3-\lambda w_c(\psi_1+\psi_2)-J\psi_3,&x\in\Omega,t>0,\\
\nabla\psi_1\cdot\nu=\nabla\psi_2\cdot\nu=\nabla\psi_3\cdot\nu=0, &x\in\partial\Omega,t>0,\\
(\psi_1,\psi_2)(x,0)=(u_0-\bar{u}_0,v_0-\bar{v}_0), &x\in\Omega,\\
\int_\Omega \psi_1(\cdot,t)=\int_\Omega\psi_2(\cdot,t)=0,
& t>0,\\
\end{cases}
\end{equation}
where $\psi_1:=u-\bar{u}_0,~\psi_2:=v-\bar{v}_0,~\psi_3:=w-w_c,$ and $J:=\lambda(\bar{u}_0+\bar{v}_0)+\mu>0.$ Since the third equation is elliptic, $\psi_3$ is uniquely determined by $(\psi_1,\psi_2)$ and therefore does not constitute an independent dynamical degree of freedom. Eliminating $\psi_3$ yields the reduced system 
\begin{equation}\label{le*0-1}
\begin{cases}
(\psi_1)_t=\Delta \psi_1+\chi_1\bar{u}_0\lambda w_c\Delta (-d\Delta+J)^{-1}(\psi_1+\psi_2),&x\in\Omega,t>0,\\
(\psi_2)_t=\Delta\psi_2-\chi_2\bar{v}_0\Delta\psi_1,&x\in\Omega,t>0,\\
\nabla\psi_1\cdot\nu=\nabla \psi_2\cdot\nu=0, &x\in\partial\Omega,t>0,\\
(\psi_1,\psi_2)(x,0)=(u_0-\bar{u}_0,v_0-\bar{v}_0), &x\in\Omega,\\
\int_\Omega \psi_1(\cdot,t)=\int_\Omega\psi_2(\cdot,t)=0.
& t>0.
\end{cases}
\end{equation}
Hence, the linearized dynamics of \eqref{le*0} are completely described by that of \eqref{le*0-1}. However, the reduced linearized system \eqref{le*0-1} is governed by a nonlocal operator involving $\Delta(-d\Delta +J)^{-1}$ rather than the standard Neumann Laplacian.  Therefore, its stability cannot be inferred directly from an eigenvalue analysis. Instead, one must first establish that the corresponding elliptic operator generates an analytic semigroup and has compact resolvent, so that the spectral bound determines the growth of the linearized flow and the spectrum can be characterized entirely through eigenvalues.

\vspace{2mm}
The above discussion reveals that establishing linear stability for \eqref{le*0-1} requires a nonstandard spectral analysis {\color{black} for the associated nonlocal operator. By developing the corresponding spectral framework together with a direct argument for steady states, we obtain the following result.
}
\begin{theorem}\label{NPS}
Let $\chi_1,\chi_2,\lambda,\mu,d,\bar{v}_0,\bar{u}_0$ be fixed. Then for all $r>0$,  the constant steady state $(\bar{u}_0,\bar{v}_0,w_c)$ is linearly stable, and \eqref{system} does not admit positive non-constant steady state solutions.
\end{theorem}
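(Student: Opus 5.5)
Linear stability rests on a Fourier decomposition of \eqref{le*0-1} in the Neumann eigenfunctions; nonexistence of nonconstant steady states rests on a direct integral identity.

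\textbf{Step 1 (spectral reduction).} Let $0=\mu_0<\mu_1\le\mu_2\le\cdots\to\infty$ be the Neumann eigenvalues of $-\Delta$ on $\Omega$, with $L^2$-orthonormal eigenfunctions $\phi_k$. The mean-zero constraint removes $k=0$. On the mean-zero subspace of $L^2(\Omega)^2$, write the operator of \eqref{le*0-1} as $\mathcal{L}=\Delta I+\mathcal{B}$. The coupling $\mathcal{B}$ consists of the term $\chi_1\bar u_0\lambda w_c\Delta(-d\Delta+J)^{-1}$, which is bounded on $L^2$, and the term $-\chi_2\bar v_0\Delta$, which is of the same order as the diagonal. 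The matrix principal part $\begin{pmatrix}1&0\\-\chi_2\bar v_0&1\end{pmatrix}\Delta$ is triangular with diagonal $\Delta$, so it is normally elliptic. Standard Amann-type theory then gives that $\mathcal{L}$ generates an analytic semigroup with compact resolvent; the bounded perturbation does not affect this. Consequently the spectrum consists only of eigenvalues, and the growth bound equals the spectral bound.

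\textbf{Step 2 (eigenvalues).} Each $\operatorname{span}\{\phi_k\}^2$ is invariant under $\mathcal{L}$, and on it $\mathcal{L}$ acts as the matrix
$$M_k=\begin{pmatrix}-\mu_k-a_k & -a_k\\ \chi_2\bar v_0\mu_k & -\mu_k\end{pmatrix},\qquad a_k:=\frac{\chi_1\bar u_0\lambda w_c\mu_k}{d\mu_k+J}>0.$$
Its entries and invariants are:
\begin{itemize}
\item $\operatorname{tr}M_k=-2\mu_k-a_k<0$;
\item $\det M_k=\mu_k(\mu_k+a_k)+a_k\chi_2\bar v_0\mu_k>0$.
\end{itemize}
Hence both eigenvalues of every $M_k$ have negative real part. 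The eigenvalues $\sigma$ of $M_k$ satisfy $|\sigma|\to\infty$ as $k\to\infty$, with real parts $\le -\mu_k/2$ for large $k$. Together with $\mu_1>0$, this gives $\sup_k\max\operatorname{Re}\sigma(M_k)<0$. Completeness of $\{\phi_k\}$ shows the spectrum of $\mathcal{L}$ is exactly $\bigcup_k\sigma(M_k)$, so the spectral bound is negative and the steady state is linearly stable for every $r>0$. Note that $r$ enters only through $w_c$ and hence through $a_k>0$. The absence of purely imaginary eigenvalues is the reason there is no Hopf bifurcation.

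\textbf{Step 3 (no nonconstant steady states).} Let $(u,v,w)$ be a positive steady state.
\begin{itemize}
\item Zero flux in the $u$-equation gives $\nabla u-\chi_1 u\nabla w=0$. Indeed, multiplying $\nabla\cdot(\nabla u-\chi_1u\nabla w)=0$ by $\ln u-\chi_1w$ and integrating gives $\int u|\nabla(\ln u-\chi_1 w)|^2=0$. Hence $u=c_1e^{\chi_1 w}$.
\item In the same way, $v=c_2e^{\chi_2 u}$.
\item Substituting into the elliptic equation gives $d\Delta w=(\lambda(u+v)+\mu)w-r=:F(w)$, where $F(w)=\big(\lambda(c_1e^{\chi_1w}+c_2e^{\chi_2c_1e^{\chi_1w}})+\mu\big)w-r$ is strictly increasing in $w>0$.
\item Test with $\Delta w$, or more simply take a maximum point $x_M$ and a minimum point $x_m$ of $w$. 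There $d\Delta w(x_M)\le0$ gives $F(w_{\max})\le0$, and $d\Delta w(x_m)\ge 0$ gives $F(w_{\min})\ge0$. Monotonicity of $F$ forces $w_{\max}\le w_{\min}$, so $w$ is constant.
\end{itemize}
It follows that $u$ and $v$ are constant.

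The main obstacle is Step 1: rigorously justifying that the nonlocal, non-symmetric operator has compact resolvent and generates an analytic semigroup, and that its spectrum is exactly the union of the $\sigma(M_k)$. I would handle the second point by solving $(\sigma-\mathcal{L})\psi=f$ mode by mode. Uniform bounds on $(\sigma-M_k)^{-1}$ for $\sigma$ off $\bigcup_k\sigma(M_k)$, combined with Parseval, show that $\sigma$ lies in the resolvent set.
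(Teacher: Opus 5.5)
Your proposal is correct and follows the paper's route. You split the operator into the triangular principal part plus the bounded nonlocal term $\chi_1\lambda\bar u_0w_c\Delta(-d\Delta+J)^{-1}$, which gives an analytic semigroup with compact resolvent. You reduce on Neumann modes to exactly the paper's matrices $\mathcal G_m$. For steady states you write $U=c_1e^{\chi_1W}$, $V=c_2e^{\chi_2U}$ and use monotonicity of the resulting nonlinearity in $W$.

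The differences are cosmetic. You work in $L^2$ rather than $L^p$. You use trace/determinant plus large-$k$ asymptotics instead of the paper's explicit bound $\operatorname{Re}\eta\le-\sigma_m\le-\sigma_1$. You close the steady-state argument at extremum points rather than by testing with $W-\bar W$. That works also at boundary extrema, since $\partial_\nu w=0$ and $w\in C^2(\overline{\Omega})$ force $\nabla w=0$ and $\Delta w\le 0$ at a boundary maximum.
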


\begin{remark}
\em{ {\color{black}The linear stability in Theorem \ref{NPS} excludes Hopf bifurcation emanating from $(\bar{u}_0,\bar{v}_0,w_c)$.} It reveals a striking contrast between the parabolic-elliptic system \eqref{system} and the fully parabolic system \eqref{system*}, for which temporal oscillations arise through Hopf bifurcation (c.f. \cite{CaoChuJin-2026}). }

\end{remark}

\vspace{2mm}
The remaining part of this paper is organized as follows. In Section \ref{app}, we establish the global existence, boundedness and stability of classical solutions in arbitrary dimensional settings. Section \ref{LB} analyzes the spectrum of the nonlocal linearized operator, establishes linear stability of  $(\bar u_0,\bar v_0,w_c)$, and proves the nonexistence of positive non-constant steady states.

\section{Global boundedness and stabilization: proof of Theorem \ref{global_tau0_app}}\label{app}
In what follows, we shall abbreviate $\int_\Omega f dx$ and $\|f\|_{L^p(\Omega)}$ as $\int_\Omega f$ and $\|f\|_{L^p}$, respectively.  The symbols $c_i$, $C_i(i=1,2,3\cdots)$ are used to denote generic positive constants which are independent of $t$ and may vary in the context. 
\subsection{Local existence and preliminaries}
This subsection will establish the local existence of solutions to \eqref{system}. 
\begin{lemma}[Local existence]\label{locale}
Let $\Omega \subset \mathbb{R}^n(n \ge 1)$ be a bounded domain with a smooth boundary, and \eqref{IC} hold. Then there exists $T_{\operatorname{max}} \in (0, \infty]$ such that \eqref{system} admits a unique positive classical solution $(u,v,w)$ fulfilling
\begin{equation*}
u, v \in C^0(\overline{\Omega} \times [0,T_{\operatorname{max}})) \cap C^{2,1}(\overline{\Omega} \times (0,T_{\operatorname{max}})), \quad w \in C^{2,0}(\overline{\Omega} \times (0,T_{\operatorname{max}})).
\end{equation*}
Moreover, if $T_{\operatorname{max}} < \infty$, then 
\begin{equation}\label{bu}
\limsup_{t \nearrow T_{\operatorname{max}}} \left( \|u(\cdot, t)\|_{W^{1,p}} + \|v(\cdot, t)\|_{L^\infty} \right) = \infty,\ \ \text{for~some~} p>n.
\end{equation}
\end{lemma}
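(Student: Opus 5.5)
We prove Lemma \ref{locale} by a Banach fixed point argument in which the elliptic equation for $w$ is solved first, for given $(u,v)$. This makes $w$ a nonlocal but Lipschitz function of $(u,v)$, and the remaining problem for $(u,v)$ is treated by Neumann heat semigroup estimates.

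\emph{Elliptic solution operator.} For nonnegative $f\in L^\infty(\Omega)$ we define $w=\mathcal{W}[f]$ as the unique solution of
\begin{equation*}
-d\Delta w+(\lambda f+\mu)w=r, \qquad \partial_\nu w=0 \text{ on } \partial\Omega .
\end{equation*}
Lax--Milgram gives existence and uniqueness. The comparison principle gives $0<w\le r/\mu$. Elliptic $L^p$ theory gives $\|w\|_{W^{2,p}}\le C(\|f\|_{L^\infty})$ for every $p<\infty$, hence by embedding a bound on $\|w\|_{W^{1,\infty}}$. For the difference of two solutions we have
\begin{equation*}
-d\Delta(w_1-w_2)+(\lambda f_1+\mu)(w_1-w_2)=-\lambda(f_1-f_2)w_2 ,
\end{equation*}
so that $\|\nabla(w_1-w_2)\|_{L^\infty}\le C\|f_1-f_2\|_{L^\infty}$, with $C$ depending on bounds for $f_1$ and $f_2$.

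\emph{Fixed point map.} Fix $p>n$ and $R>\|u_0\|_{W^{1,\infty}}+\|v_0\|_{L^\infty}+1$. On the closed set
\begin{equation*}
X=\Bigl\{(u,v)\in \bigl(C^0([0,T];W^{1,p}(\Omega))\bigr)\times C^0(\overline\Omega\times[0,T]) :\ \|u\|_{W^{1,p}}+\|v\|_{L^\infty}\le R\Bigr\}
\end{equation*}
we set $w=\mathcal{W}[(u+v)_+]$ and define $\Phi(u,v)=(\tilde u,\tilde v)$ by the variation-of-constants formulas
\begin{equation*}
\tilde u(t)=e^{t\Delta}u_0-\chi_1\int_0^t e^{(t-s)\Delta}\nabla\cdot(u\nabla w)(s)\,ds,
\qquad
\tilde v(t)=e^{t\Delta}v_0-\chi_2\int_0^t e^{(t-s)\Delta}\nabla\cdot(v\nabla u)(s)\,ds .
\end{equation*}
The cascade structure is the main obstacle. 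The flux in the $v$-equation involves $\nabla u$, so $u$ has to be controlled in $W^{1,p}$ with $p>n$, and not merely in $L^\infty$.
\begin{itemize}
\item For the $v$-component we use the standard smoothing estimate $\|e^{t\Delta}\nabla\cdot \phi\|_{L^\infty}\le C(1+t^{-\frac12-\frac{n}{2p}})\|\phi\|_{L^p}$. Its exponent is integrable near $0$ precisely because $p>n$.
\item For the $u$-component the flux $u\nabla w$ is controlled in $W^{1,p}$ via the elliptic $W^{2,p}$ bound on $w$. We use $\|\nabla e^{t\Delta}\nabla\cdot\phi\|_{L^p}\le Ct^{-1/2}\|\phi\|_{W^{1,p}}$.
\end{itemize}
For small $T$ these estimates show that $\Phi$ maps $X$ into itself and is a contraction; the Lipschitz estimate for $\mathcal{W}$ above handles the $w$-dependence. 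Banach's theorem then yields a unique mild solution on $[0,T]$, with $T$ depending only on $R$.

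\emph{Regularity, positivity, extension.} Parabolic Schauder theory, bootstrapped with elliptic Schauder estimates for $w$ (H\"older in $x$, and continuous in $t$ by the Lipschitz dependence of $\mathcal{W}$), upgrades the mild solution to the stated classical regularity. Positivity of $u$ and $v$ follows from the strong maximum principle, since each equation is linear in its own unknown with bounded coefficients and $u_0,v_0\geq,\not\equiv0$; hence the truncation $(u+v)_+$ is inactive. Uniqueness follows from the contraction property. Finally, since the existence time depends only on $\|u\|_{W^{1,p}}+\|v\|_{L^\infty}$ at the initial time, the standard continuation argument gives the extensibility criterion \eqref{bu}.
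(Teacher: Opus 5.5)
Your proposal is correct and is essentially the paper's argument. Both use a Banach fixed point in $C^0([0,T];W^{1,p}(\Omega))\times C^0(\overline\Omega\times[0,T])$, with $w$ solved from the elliptic equation, the $t^{-1/2}$ smoothing estimate for the $u$-component, the $t^{-\frac12-\frac{n}{2p}}$ estimate for the $v$-component, and the standard continuation argument. The only real difference is how you keep the absorption coefficient nonnegative: you truncate to $(u+v)_+$ and remove the truncation afterwards via positivity, whereas the paper builds the constraint $\min(u+v)\ge-\frac{\mu}{2\lambda}$ into the invariant set $S$ and checks that it is preserved for small $\widetilde T$.

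Two small points need tidying. First, $R$ should be chosen in terms of $c_1\|u_0\|_{W^{1,p}}$, as in the paper's $N_1$. The heat semigroup is only bounded on $W^{1,p}$ with a constant that may exceed one, and $\|u_0\|_{W^{1,p}}$ is not controlled by $\|u_0\|_{W^{1,\infty}}$ without a factor $|\Omega|^{1/p}$; this choice is also what makes the existence time depend only on the $W^{1,p}$-norm, as your continuation step requires. Second, the contraction in $W^{1,p}$ needs a $W^{2,p}$ Lipschitz bound for $\mathcal{W}$, not only the $W^{1,\infty}$ bound you state; the same elliptic $L^p$ estimate supplies it.
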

\begin{proof}
We shall use the Banach fixed point theorem to prove the local existence. The proof will be divided into three steps.

{\bf Step 1: Defining a map $\Phi$.} 
Fix $p>n$. For any given $\widetilde{T} \in (0,1)$, we define the Banach space 
$$X_{\widetilde{T}} := C^0([0,\widetilde{T}]; W^{1,p}(\Omega)) \times C^0([0,\widetilde{T}]; C^0(\overline{\Omega}))$$
equipped with the norm $$\|(u,v)\|_{X_{\widetilde{T}}} := \|u\|_{L^\infty([0,\widetilde{T}]; W^{1,p})} + \|v\|_{L^\infty([0,\widetilde{T}]; L^\infty(\Omega))}.$$ 
By standard smoothing estimates for the Neumann heat semigroup \(\{e^{t\Delta}\}_{t\ge 0}\), there exists a constant \(c_1>0\) such that
$$
\|e^{t\Delta}u_0\|_{W^{1,p}} \le c_1\|u_0\|_{W^{1,p}}.
$$
We further introduce a closed subset \(S\subset X_{\widetilde{T}}\)  as follows:
\begin{equation*}
    S := \left\{ (u,v) \in X_{\widetilde{T}} \;\big|\; \|u\|_{L^\infty((0,\widetilde{T}); W^{1,p}(\Omega))} \le N_1, \; \|v\|_{L^\infty((0,\widetilde{T}); L^\infty(\Omega))} \le N_2, \; \min_{\overline{\Omega} \times [0,\widetilde{T}]} (u + v) \ge -\frac{\mu}{2\lambda} \right\},
\end{equation*}
where the positive constants
$$N_1 := c_1\|u_0\|_{W^{1,p}} + 1 \ \ \mathrm{and} \ \ N_2 := \|v_0\|_{L^\infty} + 1.$$
For any given $(u,v)\in S$, since $\min_{\overline{\Omega}\times[0,\widetilde T]}(u+v)
\ge -\frac{\mu}{2\lambda},$ we have
\[
\lambda(u+v)+\mu
\ge \lambda\left(-\frac{\mu}{2\lambda}\right)+\mu
=\frac{\mu}{2}>0.
\]
Therefore, by the Lax--Milgram theorem, there exists
a unique weak solution $w\in H^1(\Omega)$ solving
\begin{equation}\label{we}
\begin{cases}
d\Delta w-(\lambda(u+v)+\mu)w=-r,
&x\in\Omega ,\\
\nabla w\cdot\nu=0,
&x\in\partial\Omega.
\end{cases}
\end{equation}
Moreover, the maximum principle yields $w\ge0$. Applying standard $L^p$-elliptic regularity theory to \eqref{we}, then using the Sobolev embedding $W^{2,p}(\Omega)\hookrightarrow W^{1,\infty}(\Omega)$ for $p>n$, we obtain 
\begin{equation}\label{eqwb}
\|w(\cdot,t)\|_{L^\infty}
+\|\nabla w(\cdot,t)\|_{L^\infty}
\le c_2\|w(\cdot,t)\|_{W^{2,p}}
\le c_3\|-r\|_{L^\infty}
\le c_4,
\end{equation}
where $c_4>0$ is a constant depending on $N_1$ and $N_2$. 
Moreover, the continuous dependence of the elliptic solution on
$(u,v)$ implies that
\begin{equation}\label{w-cont}
w\in C^0([0,\widetilde T];W^{2,p}(\Omega)).    
\end{equation}
Then we can define a mapping $\Phi(u,v)(t) := (\Phi_1(u,v)(t), \Phi_2(u,v)(t))$ via Duhamel's formula:
\begin{align*}
    \Phi_1(u,v)(t) &:= e^{t\Delta}u_0 - \chi_1 \int_0^t e^{(t-s)\Delta} F(u(s), v(s), w(s)) ds, \\
    \Phi_2(u,v)(t) &:= e^{t\Delta}v_0 - \chi_2 \int_0^t e^{(t-s)\Delta} \nabla \cdot (v(s) \nabla u(s)) ds,
\end{align*}
where 
\begin{equation*}
    F(u,v,w) := \nabla u\cdot \nabla w+u\Delta w =\nabla u \cdot \nabla w + \frac{u}{d} \big( \lambda(u+v)w + \mu w - r \big).
\end{equation*}

{\bf Step 2: Showing $\Phi(S) \subset S$.} By the definition of the set $S$ and the Sobolev embedding theorem, one has
$$\|u\|_{L^\infty} \le c_5 \|u\|_{W^{1,p}} \le c_5 N_1,$$
which, combined with \eqref{eqwb}, gives
\begin{equation}\label{EF}
    \|F(u,v,w)\|_{L^{p}} \le c_4N_1  + \frac{c_5 N_1|\Omega|^{\frac{1}{p}}}{d} \big( \lambda(c_5 N_1 + N_2)c_4 + \mu c_4 + r \big)=:c_6.
\end{equation}
 Then using the well-known semigroup estimates (see e.g., \cite[Lemma 2.1]{CaoXinru-DCDSA-2015}) and \eqref{EF}, we derive 
\begin{equation}\label{phi1}
\begin{split}
    \|\Phi_1(u,v)(t)\|_{W^{1,p}} &\le c_1\|u_0\|_{W^{1,p}} + c_7\chi_1 \int_0^t  \big(1 + (t-s)^{-\frac{1}{2}}\big) \|F(s)\|_{L^{p}} ds \\
    &\le c_1\|u_0\|_{W^{1,p}} + c_6 c_7 \chi_1 (\widetilde{T} + 2\widetilde{T}^{\frac{1}{2}}) \\
    &\le c_1\|u_0\|_{W^{1,p}} + c_8 \widetilde{T}^{\frac{1}{2}},
\end{split}
\end{equation}
where $c_8 > 0$ depends on $N_1$ and $N_2$. Similarly, using the semigroup estimates again, and noting the fact that $\|v \nabla u\|_{L^{p}} \le \|v\|_{L^\infty} \|\nabla u\|_{L^{p}} \le N_2 N_1,$ we have 
\begin{equation}\label{phi2}
\begin{split}
    \|\Phi_2(u,v)(t)\|_{L^\infty} 
    &\le \|e^{t\Delta} v_0\|_{L^\infty} + c_9\chi_2 \int_0^t \big(1 + (t-s)^{-\frac{1}{2} - \frac{n}{2p}}\big) e^{-\lambda_1 (t-s)} \|v(s) \nabla u(s)\|_{L^{p}} ds \\
    &\le \|v_0\|_{L^\infty} + c_9 \chi_2 N_1 N_2 \left( \widetilde{T} + \frac{\widetilde{T}^{1-\gamma}}{1-\gamma} \right) \\
    &\le \|v_0\|_{L^\infty} + c_{10} \widetilde{T}^{1-\gamma},
\end{split}
\end{equation}
where $\gamma := \frac{1}{2} + \frac{n}{2p} = \frac{p+n}{2p} < 1$ (since $p>n$),  and $c_{10} > 0$ depends on  $N_1$ and $N_2$. 

Choosing 
$
T_1:=\min\{c_8^{-2}, c_{10}^{-\frac{1}{1-\gamma}},1\},
$
then for any $\widetilde{T}\in (0,T_1)$, we deduce from \eqref{phi1} and \eqref{phi2} that
\begin{equation}\label{phi12}
\|\Phi_1(u,v)(t)\|_{L^\infty ((0,\widetilde{T});W^{1,p}(\Omega))}\leq c_1\|u_0\|_{W^{1,p}}+1=N_1, 
\end{equation}
and
\begin{equation}\label{phi12-1}
\|\Phi_2(u,v)(t)\|_{L^\infty((0,\widetilde{T});L^\infty(\Omega))}\leq \|v_0\|_{L^\infty}+1=N_2.
\end{equation}
Noting that $e^{t\Delta}(u_0 + v_0) \ge 0$ since $u_0, v_0 \ge 0$, and using the Sobolev embedding $W^{1,p}(\Omega) \hookrightarrow L^{\infty}(\Omega)$ (since $p > n$) again, we have
\begin{equation}\label{Lb}
\begin{split}
    \Phi_1(u,v)(t) + \Phi_2(u,v)(t) &\ge 0 - \|\Phi_1(t) - e^{t\Delta}u_0\|_{L^\infty} - \|\Phi_2(t) - e^{t\Delta}v_0\|_{L^\infty} \\
    &\ge - c_5\|\Phi_1(t) - e^{t\Delta}u_0 \|_{W^{1,p}} -\|\Phi_2(t) - e^{t\Delta}v_0\|_{L^\infty} \\
    &\geq - c_5c_8 \widetilde{T}^{\frac{1}{2}} - c_{10} \widetilde{T}^{1-\gamma}.
\end{split}
\end{equation}
 Define 
\begin{equation*}
    T_2 := \min \left\{T_1,\  \frac{1}{2}, \ \big(\frac{\mu}{4\lambda c_5 c_8}\big)^2, \ \big(\frac{\mu}{4\lambda c_{10}}\big)^{\frac{1}{1-\gamma}} \right\},
\end{equation*}
then for any $\widetilde{T}\in (0,T_2)$, some calculations imply
\begin{equation*}
    c_5 c_8 \widetilde{T}^{\frac{1}{2}} + c_{10} \widetilde{T}^{1-\gamma} \le  c_5 c_8 T_2^{\frac{1}{2}} + c_{10} T_2^{1-\gamma} \le \frac{\mu}{4\lambda} + \frac{\mu}{4\lambda} = \frac{\mu}{2\lambda},
\end{equation*}
which, substituted into \eqref{Lb}, gives
\begin{equation}\label{Lb-1}
\displaystyle\min_{\overline{\Omega} \times [0,\widetilde{T}]} (\Phi_1(u,v)(t) + \Phi_2(u,v)(t)) \ge -\frac{\mu}{2\lambda}.    
\end{equation}
Combining \eqref{w-cont} with the smoothing and strong continuity properties of the Neumann heat semigroup, and using \eqref{phi12}, \eqref{phi12-1}, and \eqref{Lb-1}, we conclude that $\Phi(S)\subset S.$

{\bf Step 3: Establishing the contraction property.} For any $(u,v)$, $(\hat{u}, \hat{v})\in S$, we denote the corresponding solutions to \eqref{we} by $w$ and $\hat{w}$, respectively, which satisfy \eqref{eqwb}. Then $z := w - \hat{w}$ satisfies
\begin{equation*}
\begin{cases}
    d\Delta z - (\lambda(u+v) + \mu)z
    = \lambda(u - \hat{u} + v - \hat{v})\hat{w},
    & x\in\Omega,~t>0,\\
    \nabla z\cdot\nu=0,
    & x\in\partial\Omega,~t>0.
\end{cases}
\end{equation*}
By the embedding $W^{2,p}(\Omega) \hookrightarrow W^{1,\infty}(\Omega) \ (p>n)$, the standard $L^p$-elliptic estimates, the H\"{o}lder inequality and \eqref{eqwb}, one  derives
\begin{equation}\label{eq:diff_z}
\begin{split}
    \|z\|_{W^{2,p}} + \|z\|_{L^\infty} + \|\nabla z\|_{L^\infty} 
    &\le (1+c_2)\|z\|_{W^{2,p}}\\
    &\leq c_{11}\|\lambda(u - \hat{u} + v - \hat{v})\hat{w}\|_{L^p}\\
    &\leq \lambda c_4 c_{11} \big( \|u - \hat{u}\|_{L^{p}} + |\Omega|^{\frac{1}{p}} \|v - \hat{v}\|_{L^\infty} \big)\\
    &\leq c_{12} \|(u,v) - (\hat{u}, \hat{v})\|_{X_{\widetilde{T}}}.
\end{split}
\end{equation}
Noting \eqref{eqwb} and using the facts $\|\hat{u}\|_{L^\infty}\leq c_5\|\hat{u}\|_{W^{1,p}}\leq c_5N_1$ and $\|\hat{v}\|_{L^\infty}\leq N_2$, then from \eqref{eq:diff_z}, we can find a constant $c_{13}>0$ depending on $N_1$ and $N_2$ such that
\begin{align*}
&\|F(u,v,w) - F(\hat{u}, \hat{v}, \hat{w})\|_{L^p}\\ 
&= \|\nabla u \cdot \nabla z + \nabla (u - \hat{u}) \cdot \nabla \hat{w} +u\Delta z + (u-\hat{u})\frac{1}{d} \big[ \lambda(\hat{u}+\hat{v})\hat{w}+\mu \hat{w}-r \big]\|_{L^p}\\
&\leq \|\nabla z\|_{L^\infty}\|\nabla u\|_{L^p}+\|\nabla (u-\hat{u})\|_{L^p}\|\nabla \hat{w}\|_{L^\infty}+\|\Delta z\|_{L^p}\|u\|_{L^\infty}\\
&\ \ \ \ \ +\frac{\|u-\hat{u}\|_{L^p}}{d}\|\lambda(\hat{u}+\hat{v})\hat{w}+\mu \hat{w}-r\|_{L^\infty}\\
&\leq c_{13}  \|(u,v) - (\hat{u}, \hat{v})\|_{X_{\widetilde{T}}}.
\end{align*}
Applying the same semigroup estimates and using the fact $\widetilde{T} < 1$, we have
\begin{equation}\label{contrac-1}
\begin{aligned}
    \|\Phi_1(u,v)(t) - \Phi_1(\hat{u}, \hat{v})(t)\|_{W^{1,p}(\Omega)} 
    &\le \chi_1 \int_0^t \big\| e^{(t-s)\Delta} \big( F(u,v,w) - F(\hat{u}, \hat{v}, \hat{w}) \big) \big\|_{W^{1,p}(\Omega)} ds \\
    &\le c_7 c_{13} \chi_1 \|(u,v) - (\hat{u}, \hat{v})\|_{X_{\widetilde{T}}}\int_0^t \big(1 + (t-s)^{-\frac{1}{2}}\big) ds \\
    &\le c_{14} \widetilde{T}^{\frac{1}{2}} \|(u,v) - (\hat{u}, \hat{v})\|_{X_{\widetilde{T}}}.
\end{aligned}
\end{equation}
Similarly, we have
\begin{align*}
    \|\Phi_2(u,v)(t) - \Phi_2(\hat{u}, \hat{v})(t)\|_{L^\infty} 
    &\le \chi_2 \int_0^t \big\| e^{(t-s)\Delta} \nabla \cdot \big( v(s)\nabla u(s) - \hat{v}(s)\nabla \hat{u}(s) \big) \big\|_{L^\infty} ds \\
    &\le c_9 \chi_2 \int_0^t \big(1 + (t-s)^{-\gamma}\big) \big( N_2 \|u - \hat{u}\|_{W^{1,p}} + N_1 \|v - \hat{v}\|_{L^\infty} \big) ds \\
    &\le c_{15} \widetilde{T}^{1-\gamma} \|(u,v) - (\hat{u}, \hat{v})\|_{X_{\widetilde{T}}},
\end{align*}
which, combined with \eqref{contrac-1}, implies
\begin{equation*}
    \|\Phi(u,v) - \Phi(\hat{u}, \hat{v})\|_{X_{\widetilde{T}}} \le c_{16} \max\big\{ \widetilde{T}^{\frac{1}{2}}, \widetilde{T}^{1-\gamma} \big\} \|(u,v) - (\hat{u}, \hat{v})\|_{X_{\widetilde{T}}}.
\end{equation*}
Set 
\begin{equation*}
    T_3 := \min \big\{ (2c_{16})^{-2}, \ (2c_{16})^{-\frac{1}{1-\gamma}}, T_2\big\}.
\end{equation*}
Then for any $\widetilde{T} \in (0, T_3]$, it holds that
\begin{equation*}
    \|\Phi(u,v) - \Phi(\hat{u}, \hat{v})\|_{X_{\widetilde{T}}} \le \frac{1}{2} \|(u,v) - (\hat{u}, \hat{v})\|_{X_{\widetilde{T}}}.
\end{equation*}
Thus, the combination of Steps 1--3 shows that if $\widetilde{T}\in (0,T_3),$ the mapping $\Phi (S)\subset S$ is a strict contraction on $S$.  By the Banach fixed-point theorem,  we find a $(u,v)\in S$ such that 
$$\Phi(u,v)=(u,v),$$
and $w$ is uniquely determined by \eqref{we}. Furthermore, the standard parabolic and elliptic Schauder regularity theories ensure that $(u,v,w)$ is a classical solution in $\Omega\times (0, \widetilde{T})$, and the positivity of the solution follows from maximum principle. This along with the arbitrariness of $u_0\in W^{1,p}(\Omega)$ and $v_0\in C^0({\overline{\Omega}})$ means that for any $N_1,N_2$, there exists $\widetilde{T}:=\widetilde{T}(N_1,N_2)$ small enough such that if $\|u_0\|_{W^{1,p}}\leq N_1, \|v_0\|_{L^\infty}\leq N_2,$
then \eqref{system} is classically solvable in $\Omega\times (0, \widetilde{T})$.

A standard extension argument yields the existence of $T_{\operatorname{max}}$ fulfilling \eqref{bu}. The proof of uniqueness for the classical solutions of the system \eqref{system} is standard, we omit the details. 
\end{proof}
\subsection{Global boundedness} In this subsection, we shall prove the global boundedness of solution for the system \eqref{system}. Before that, we first give some basic  estimates.
\begin{lemma} 
Let $(u,v,w)$ be the solution obtained in Lemma \ref{locale}. Then it holds that
\begin{equation}\label{pb}
 \int_\Omega u(\cdot,t)= \int_\Omega u_0, \ \ \  \int_\Omega v(\cdot,t)= \int_\Omega v_0,\quad\forall t\in(0,T_{\operatorname{max}}),
\end{equation}
and 
\begin{equation}\label{lw2}
0\leq w(x,t)\leq \frac{r}{\mu},
\quad \forall (x,t)\in
\bar{\Omega}\times(0,T_{\operatorname{max}}).
\end{equation}
\end{lemma}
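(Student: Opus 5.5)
The plan is to derive both claims directly from the structure of \eqref{system}, using the regularity and positivity supplied by Lemma \ref{locale}.

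\textbf{Mass conservation.} For $t\in(0,T_{\operatorname{max}})$ we integrate the first equation of \eqref{system} over $\Omega$. By the divergence theorem and the homogeneous Neumann conditions $\nabla u\cdot\nu=\nabla w\cdot\nu=0$, we get
\[
\frac{d}{dt}\int_\Omega u=\int_{\partial\Omega}\big(\nabla u-\chi_1 u\nabla w\big)\cdot\nu=0 .
\]
Likewise $\frac{d}{dt}\int_\Omega v=\int_{\partial\Omega}(\nabla v-\chi_2 v\nabla u)\cdot\nu=0$. Since $u,v\in C^0(\overline\Omega\times[0,T_{\operatorname{max}}))$, the maps $t\mapsto\int_\Omega u$ and $t\mapsto\int_\Omega v$ are continuous up to $t=0$. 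This gives \eqref{pb}. To justify the differentiation, we note that $u,v\in C^{2,1}$ on $\overline\Omega\times(0,T_{\operatorname{max}})$ and that $w(\cdot,t)\in C^2(\overline\Omega)$.

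\textbf{Bounds on $w$.} Fix $t$. Lemma \ref{locale} gives $u,v\ge0$, so the coefficient $c(x):=\lambda(u+v)+\mu$ satisfies $c\ge\mu>0$.

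For the lower bound, suppose $w(\cdot,t)$ attains a negative minimum at some $x_0\in\overline\Omega$. If $x_0$ is interior, then $\Delta w(x_0)\ge0$, so
\[
0=d\Delta w-cw+r>0
\]
at $x_0$, a contradiction. If $x_0\in\partial\Omega$, the Hopf boundary lemma applied to the operator $d\Delta-c$ would force $\partial_\nu w(x_0)<0$ unless $w$ is constant, which contradicts the Neumann condition. If $w$ is a negative constant, the equation gives $cw=r>0$, again a contradiction. Hence $w\ge0$. This is also what Lemma \ref{locale} already asserts, via the maximum principle.

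For the upper bound, set $z:=w-\frac{r}{\mu}$. Then
\[
d\Delta z-cz=-r+c\,\frac{r}{\mu}=\frac{r}{\mu}\lambda(u+v)\ge0,
\]
together with $\partial_\nu z=0$. At a positive maximum of $z$, the same interior argument and Hopf argument (or the constant case) give a contradiction, because $-cz<0$ there. Therefore $z\le0$, that is, $w\le r/\mu$.

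\textbf{Expected difficulty.} The only delicate point is the boundary case of the maximum principle, since $w$ is only $C^2$ in $x$ for each fixed $t$. This is handled by the strong maximum principle with Hopf's lemma for $d\Delta-c$, where $c\ge\mu>0$ is bounded. An alternative is a weak formulation: test the equation for $z$ with $z_+$. Integration by parts gives
\[
d\int_\Omega|\nabla z_+|^2+\int_\Omega c\,z_+^2\le0,
\]
so $z_+\equiv0$. Testing the equation for $w$ with $w_-$ yields $w\ge0$ in the same way. I would use this energy version, since it avoids the Hopf lemma entirely.
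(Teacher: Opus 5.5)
Your proposal is correct and follows essentially the same route as the paper, which just cites direct integration (using the no-flux boundary conditions) for mass conservation and the comparison principle for the bounds on $w$. Your maximum-principle argument for $w$ and for $z=w-\frac{r}{\mu}$, as well as the equivalent truncation test with $w_-$ and $z_+$, is exactly this comparison of $w$ with the constant subsolution $0$ and the constant supersolution $\frac{r}{\mu}$, written out in detail.
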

\begin{proof}
Direct integrations and the comparison principle give \eqref{pb} and \eqref{lw2}, respectively.
\end{proof}
\begin{lemma}\label{appui}
Let $(u,v,w)$ be the solution obtained in Lemma \ref{locale}. Then there exists a constant $C_1>0$ independent of $r$ such that 
\begin{equation}\label{appu_infty}
    \sup_{t \in (0, T_{\max})} \|u(\cdot, t)\|_{L^\infty} \le C_1 (r^{\frac{n+2}{2}} + 1).
\end{equation}
\end{lemma}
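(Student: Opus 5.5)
The plan is to get $L^p$ bounds for $u$ that involve only $r$ and the conserved mass, and then to pass to $L^\infty$ by a Moser--Alikakos iteration. The structure that makes this work is that the elliptic equation lets us replace $\Delta w$ pointwise, and the resulting $v$-dependent term has a good sign.

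\textbf{Step 1 ($L^p$ inequality).} For $p\ge 2$, test the $u$-equation with $u^{p-1}$ and integrate the taxis term by parts twice:
\begin{equation*}
\chi_1(p-1)\int_\Omega u^{p-2}u\nabla u\cdot\nabla w=\frac{\chi_1(p-1)}{p}\int_\Omega \nabla u^p\cdot\nabla w=-\frac{\chi_1(p-1)}{p}\int_\Omega u^p\Delta w .
\end{equation*}
By the third equation of \eqref{system}, $d\Delta w=\lambda(u+v)w+\mu w-r$. Together with $u,v,w\ge0$ from \eqref{lw2}, this gives $-u^p\Delta w\le \frac{r}{d}u^p$. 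Hence
\begin{equation*}
\frac1p\frac{d}{dt}\int_\Omega u^p+\frac{4(p-1)}{p^2}\int_\Omega|\nabla u^{p/2}|^2\le \frac{\chi_1(p-1)r}{pd}\int_\Omega u^p .
\end{equation*}
The unknown $v$ has disappeared, because its contribution $-\lambda u^p v w\le 0$ can be dropped. This is the key structural point.

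\textbf{Step 2 (absorbing the growth term).} Set $\varphi=u^{p/2}$. The Gagliardo--Nirenberg inequality gives
\begin{equation*}
\|\varphi\|_{L^2}^2\le C\|\nabla\varphi\|_{L^2}^{2a}\|\varphi\|_{L^{q}}^{2(1-a)}+C\|\varphi\|_{L^q}^2, \qquad a=\frac{n/q-n/2}{1+n/q-n/2}\in(0,1),
\end{equation*}
where we take $q=2/p$ at the first stage, so that $\|\varphi\|_{L^q}^{q}=\int_\Omega u$, which is conserved by \eqref{pb}. At later stages we take $q=1$, so that $\|\varphi\|_{L^1}=\int_\Omega u^{p/2}$. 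Young's inequality then gives, for $\delta>0$,
\begin{equation*}
\int_\Omega\varphi^2\le \delta\int_\Omega|\nabla\varphi|^2+C\delta^{-\frac{a}{1-a}}\Big(\int_\Omega \varphi^{q}\Big)^{2/q}.
\end{equation*}
We choose $\delta\sim 1/(p r)$ and add $\int_\Omega u^p$ to both sides of the inequality in Step 1. This yields an ODE inequality $y'+y\le C p^{\beta}(r+1)^{\sigma}M^2$, where $y=\int_\Omega u^p$ and $M$ is the lower-order norm. The ODE comparison then gives $\sup_t\int_\Omega u^p\le \max\{\int_\Omega u_0^p,\;Cp^\beta (r+1)^\sigma M^2\}$. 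At the first stage with $p=2$ we take $q=1$, which gives $a/(1-a)=n/2$. With $\delta\sim 1/r$, this produces the growth $r^{n/2}$, i.e.\ a bound $\|u\|_{L^2}^2\lesssim r^{n/2+1}$, modulo the initial-data terms.

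\textbf{Step 3 (Alikakos iteration).} Take $p_k=2^k$ and $M_k=\max\{1,\sup_t\int_\Omega u^{p_k}\}$. Step 2 with $q=1$ gives the recursion $M_k\le \max\{\|u_0\|_{L^\infty}^{p_k}|\Omega|+1,\ C\,2^{k\beta}(r+1)^{\sigma}M_{k-1}^2\}$. Iterating and taking $p_k$-th roots, we get $\sup_t\|u\|_{L^\infty}\le C\,(r+1)^{\sigma\sum_k 2^{-k}}\cdot(\text{first-stage bound})$. The main obstacle is the bookkeeping needed to show that the accumulated power of $r$ is exactly $\frac{n+2}{2}$ rather than some larger exponent. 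On a first attempt I would track the exponent through the standard Alikakos lemma with explicit dependence of the constant on the coefficient $r$ of the linear growth term. The natural scaling is that an equation $u_t=\Delta u+(\ldots)+ru$ with fixed mass has the $L^\infty$ bound of order (mass)$\cdot r^{n/2}$ times a factor of $r$ from the linear term. If the direct count yields a different exponent, I would instead choose $\delta$ in Step 2 as $\delta_k\sim 2^{-k}r^{-1}$ and sum the geometric series carefully, so that all contributions combine into a single factor of order $r^{(n+2)/2}$, absorbing constants into $C_1$. Throughout, the dependence on $u_0$ is absorbed into $C_1$, which is independent of $r$.
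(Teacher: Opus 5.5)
Your proposal is correct and follows essentially the same route as the paper. You test with $u^{p-1}$, replace $\Delta w$ via the elliptic equation and drop the nonpositive $v$- and $\mu$-terms, then apply Gagliardo--Nirenberg against $\|u^{p/2}\|_{L^1}$ with Young's inequality to get $y'+y\le C\bigl(p(r+1)\bigr)^{\frac{n+2}{2}}\bigl(\int_\Omega u^{p/2}\bigr)^2$, and finish with a Moser--Alikakos iteration over $p=2^k$. The bookkeeping you flag as the main obstacle resolves immediately by the direct count. Each stage contributes $(r+1)^{\frac{n+2}{2}\cdot 2^{-k}}$ to $\|u\|_{L^{2^k}}$, and $\sum_{k\ge1}2^{-k}=1$, so the exponent is exactly $\frac{n+2}{2}$ and no fallback choice of $\delta_k$ is needed.
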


\begin{proof}
 Multiplying the first equation of \eqref{system} by $q u^{q-1}$ for $q \ge 2$ and integrating  it over $\Omega$, we obtain
 \begin{equation}\label{u-i1}
\begin{split}
    \frac{d}{d t} \int_\Omega u^q + \frac{4(q-1)}{q} \int_\Omega \big|\nabla u^{\frac{q}{2}}\big|^2 
    &= \chi_1 q(q-1) \int_\Omega u^{q-1} \nabla u \cdot \nabla w \\
    &= -\chi_1 (q-1) \int_\Omega u^q \Delta w \\
    &= -\frac{\chi_1 (q-1)}{d} \int_\Omega u^q \big( \lambda(u+v)w + \mu w - r \big) \\
    &\le \frac{\chi_1 r(q-1)}{d} \int_\Omega u^q , 
\end{split}
\end{equation}
where we have used the  identity $u^{q-2}|\nabla u|^2=\frac{4}{q^2}|\nabla u^{\frac{q}{2}}|^2$  and the fact $\Delta w = \frac{1}{d}(\lambda(u+v)w + \mu w - r)$.

Let $c_1=\frac{\chi_1}{d}$. Then, by the Gagliardo-Nirenberg inequality and Young's inequality, one derives
\begin{equation}\label{0722*}
\begin{split}
\left(\frac{\chi_1(q-1)}{d} r +1\right)\int_\Omega u^q  
&\leq   (c_1 r q +1)\int_\Omega u^q \\
&\leq (c_1 r + 1) q \|u^{\frac{q}{2}}\|_{L^2}^2 \\
&\le c_2 (c_1 r + 1) q \|\nabla u^{\frac{q}{2}}\|_{L^2}^{\frac{2n}{n+2}} \|u^{\frac{q}{2}}\|_{L^1}^{\frac{4}{n+2}} + c_3 (c_1 r + 1) q \|u^{\frac{q}{2}}\|_{L^1}^2\\
&\le \|\nabla u^{\frac{q}{2}}\|_{L^2}^2 + c_4 (c_1 r + 1)^{\frac{n+2}{2}} q^{\frac{n+2}{2}} \|u^{\frac{q}{2}}\|_{L^1}^2.
\end{split}
\end{equation}
Substituting \eqref{0722*} into \eqref{u-i1} and using \(\frac{4(q-1)}{q} > 1\) for \(q \ge 2\), we obtain
\begin{equation}\label{0723}
    \frac{d}{dt} \int_\Omega u^q + \int_\Omega u^q \le c_4 (c_1 r + 1)^{\frac{n+2}{2}} q^{\frac{n+2}{2}} \left( \int_\Omega u^{\frac{q}{2}} \right)^2.
\end{equation}
Let $q = 2^j$ for $j \ge 1$, then applying the ODE comparison principle to \eqref{0723} yields
\begin{equation}\label{moser-0}
    \sup_{t \in (0, T_{\max})} \int_\Omega u^{2^j} \le \int_\Omega u_0^{2^j} + c_4 (c_1 r + 1)^{\frac{n+2}{2}} 2^{\frac{n+2}{2}j} \sup_{t \in (0, T_{\max})} \left( \int_\Omega u^{2^{j-1}} \right)^2.
\end{equation}
{\color{black}Define 
\begin{equation}\label{defineMj}
    M_j := \max\big\{ 1, \|u_0\|_{L^\infty}, \sup_{t \in (0, T_{\max})} \|u(\cdot, t)\|_{L^{2^j}} \big\}.
\end{equation}}
Noting that $\int_\Omega u_0^{2^j}\le |\Omega|\|u_0\|_{L^\infty}^{2^j}$, we infer from \eqref{pb}, \eqref{moser-0} and \eqref{defineMj} that there exists a constant $c_5>0$, independent of $j$ and $r$, such that
\begin{equation}\label{0722*1}
\begin{split}
M_j &\le c_5^{\frac{1}{2^j}} (r + 1)^{\frac{n+2}{2^{j+1}}} 2^{\frac{n+2}{2} \frac{j}{2^j}} M_{j-1} \\
&\le c_5^{\sum_{i=1}^j \frac{1}{2^i}} (r + 1)^{\frac{n+2}{2} \sum_{i=1}^j \frac{1}{2^i}} 2^{\frac{n+2}{2} \sum_{i=1}^j \frac{i}{2^i}} M_0 \\
&= c_5^{1 - \frac{1}{2^j}} (r + 1)^{\frac{n+2}{2} \left( 1 - \frac{1}{2^j} \right)} 2^{\frac{n+2}{2} \left( 2 - \frac{j+2}{2^j} \right)} M_0.
\end{split}
\end{equation}
By \eqref{pb}, one gets that $M_0 = \max\left\{ 1, \|u_0\|_{L^\infty},  \|u_0\|_{L^1} \right\}$ is bounded by a positive constant independent of $r$. Then we deduce from \eqref{0722*1} that
\begin{equation*}
    \sup_{t \in (0, T_{\max})} \|u(\cdot, t)\|_{L^\infty} \leq \lim_{j \to \infty} M_j \le c_5 (r + 1)^{\frac{n+2}{2}} 2^{n+2} M_0 \le C_1(r^{\frac{n+2}{2}} + 1). 
\end{equation*}
Hence the proof of Lemma \ref{appui} is complete.
\end{proof}

\begin{lemma}\label{appvp}
Let $(u,v,w)$ be the solution obtained in Lemma \ref{locale}. There exists a constant $r_1\in(0,1)$ such that for any given $p>n$ and $0<r\leq r_1$, there exist two constants $C_2, C_3>0$, independent of $t$ and $r$, such that
\begin{equation}\label{appv_p}
    \sup_{t \in (0,T_{\max})} \|v(\cdot, t)\|_{L^p} \le C_2,
\end{equation}
and 
\begin{equation}\label{appuv}
\|w(\cdot,t)\|_{W^{1,\infty}}+\|\nabla u(\cdot,t)\|_{L^\infty} \leq C_3, \quad  \forall t \in (0,T_{\operatorname{max}}).  
\end{equation}
\end{lemma}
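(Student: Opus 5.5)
We argue by an $L^p$ energy estimate for $v$ that exploits smallness of $r$ through $\nabla u$, which in turn is controlled via $\Delta w$.

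\textbf{Step 1: testing the $v$-equation.} Multiplying the second equation of \eqref{system} by $p v^{p-1}$ and integrating by parts gives
\begin{equation*}
\frac{d}{dt}\int_\Omega v^p+\frac{4(p-1)}{p}\int_\Omega|\nabla v^{\frac p2}|^2
=\chi_2 p(p-1)\int_\Omega v^{p-1}\nabla v\cdot\nabla u
\le \frac{p-1}{p}\int_\Omega|\nabla v^{\frac p2}|^2+c\,p(p-1)\chi_2^2\int_\Omega v^p|\nabla u|^2 .
\end{equation*}
Everything therefore hinges on a smallness bound for $\|\nabla u\|_{L^\infty}$, or at least for $\|\nabla u\|_{L^{q}}$ with $q$ large. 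Since $\Delta w=\frac1d(\lambda(u+v)w+\mu w-r)$ and $0\le w\le r/\mu$ by \eqref{lw2}, we have
\begin{equation*}
|\Delta w|\le \frac{r}{d}\Big(\frac{\lambda}{\mu}(u+v)+2\Big).
\end{equation*}
This quantity is small whenever $r$ is small and $u,v$ are controlled.

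\textbf{Step 2: bootstrap.} Fix $p>n$ and set $T^*:=\sup\{T<T_{\max}:\ \|v(\cdot,t)\|_{L^p}\le 2K\ \text{on}\ (0,T)\}$, where $K$ exceeds a suitable multiple of $\|v_0\|_{L^\infty}+1$ chosen below. On $(0,T^*)$:
\begin{itemize}
\item Lemma \ref{appui} gives $\|u\|_{L^\infty}\le C_1(r^{\frac{n+2}{2}}+1)\le 2C_1$ for $r\le1$.
\item Hence $\|\Delta w\|_{L^p}\le c\,r(1+K)$, and elliptic $W^{2,p}$-regularity together with $W^{2,p}\hookrightarrow W^{1,\infty}$ yields $\|\nabla w\|_{L^\infty}\le c\,r(1+K)$.
\item For $\nabla u$, write the variation-of-constants formula $u(t)=e^{t\Delta}u_0-\chi_1\int_0^te^{(t-s)\Delta}\nabla\cdot(u\nabla w)\,ds$. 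The standard estimate
\begin{equation*}
\|\nabla e^{\tau\Delta}\nabla\cdot f\|_{L^\infty}\le c\,(1+\tau^{-\frac12-\frac{n}{2q}})e^{-\lambda_1\tau}\|f\|_{L^q},
\end{equation*}
applied with $q>n$ (or one derivative on the semigroup acting on $F=\nabla u\cdot\nabla w+u\Delta w$), gives
\begin{equation*}
\|\nabla u(t)\|_{L^\infty}\le c\|u_0\|_{W^{1,\infty}}+c\,r(1+K)\big(1+\sup_{s<t}\|\nabla u(s)\|_{L^\infty}\big).
\end{equation*}
For $r(1+K)$ small, this produces a uniform bound on $\nabla u$ that is independent of $r$.
\end{itemize}
This is not yet small, because the $e^{t\Delta}u_0$ part is large. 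To handle it, I would instead apply the estimate for $v$ through its own Duhamel formula: $\|v(t)\|_{L^p}\le\|v_0\|_{L^p}+\chi_2\int_0^t\|e^{(t-s)\Delta}\nabla\cdot(v\nabla u)\|_{L^p}\,ds$. The term $e^{(t-s)\Delta}$ decays like $e^{-\lambda_1(t-s)}$ on mean-zero data, but the part involving $\nabla e^{t\Delta}u_0$ also decays exponentially, like $e^{-\lambda_1 t}$. So the contribution of $u_0$ is integrable in time even though it is not small.

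\textbf{Step 3: closing the bound on $v$.} Splitting $\nabla u=\nabla e^{t\Delta}u_0+(\text{an $O(r)$ term})$, one gets
\begin{equation*}
\frac{d}{dt}\int_\Omega v^p\le c\big(e^{-2\lambda_1t}+r^2\big)\int_\Omega v^p-c'\int_\Omega|\nabla v^{\frac p2}|^2 .
\end{equation*}
The $r^2\int v^p$ term is absorbed using Gagliardo--Nirenberg together with the mass bound \eqref{pb}, for $r$ small. Gr\"onwall's inequality with the integrable weight $e^{-2\lambda_1 t}$ then gives $\|v\|_{L^p}\le K$ with $K$ independent of $r$. Choosing $r_1$ so that $r_1(1+2K)$ is small enough, the bound improves $2K$ to $K$, so $T^*=T_{\max}$. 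This yields \eqref{appv_p}, and \eqref{appuv} follows from the estimates in Step 2.

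\textbf{Main obstacle.} The delicate point is the circularity: $K$ must be fixed from $u_0,v_0$ alone, since it governs the exponentially decaying $\nabla e^{t\Delta}u_0$ contribution, before $r_1$ is chosen. This requires the constant in front of $e^{-2\lambda_1 t}$ not to depend on $K$. If it does, I would instead run the estimate first on a short interval $[0,1]$ using the local bounds of Lemma \ref{locale}, and only afterwards exploit smallness of $r$ for $t\ge1$.
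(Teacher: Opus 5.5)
Your bootstrap is correct, and its skeleton is the paper's: a threshold on $\|v\|_{L^p}$, the $r$-uniform bound on $\|u\|_{L^\infty}$ from Lemma \ref{appui}, the elliptic estimate $\|w\|_{W^{1,\infty}}\le c\,r(1+K)$, and a Duhamel estimate giving $\sup\|\nabla u\|_{L^\infty}\le 2c\|u_0\|_{W^{1,\infty}}+1$ once $r(1+2K)$ is small.

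The difference is in Step 3, and it comes from a misdiagnosis: you do not need $\nabla u$ to be small or decaying, only bounded. By mass conservation \eqref{pb}, Gagliardo--Nirenberg gives $\int_\Omega v^p\le c_3\big(\int_\Omega|\nabla v^{p/2}|^2\big)^{\theta}+c_4$ with $\theta=\frac{n(p-1)}{np+2-n}<1$. Young's inequality then absorbs $A\int_\Omega v^p$ into the dissipation for \emph{any} bounded $A$, leaving $\frac{d}{dt}\int_\Omega v^p+\int_\Omega v^p\le c_5A^{\frac{np+2-n}{2}}+c_4A$. The paper applies this with $A(t)\le\hat A$, where $\hat A$ depends only on $\|\nabla u_0\|_{L^\infty}$. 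It fixes the threshold $M$ from $v_0$ and $\hat A$ before choosing $r_1$.

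Hence your splitting $\nabla u=\nabla e^{t\Delta}u_0+O(r)$, the Gr\"onwall argument with weight $e^{-2\lambda_1t}$, and your `main obstacle' are all unnecessary. The circularity is already broken by your own Step 2, whose $\nabla u$ bound does not involve $K$. Your detour does close, since the coefficient of $e^{-2\lambda_1 t}$ depends only on $u_0$, but it buys nothing. It still needs $r(1+2K)$ small. Its Gr\"onwall factor $\exp\big(c\int_0^\infty\|\nabla e^{s\Delta}u_0\|_{L^\infty}^2\,ds\big)$ also makes the constant exponential, rather than polynomial, in the size of $u_0$. Moreover, Gr\"onwall yields a time-uniform bound only if you keep part of the dissipation in the form $-\delta\int_\Omega v^p+C$, via the same Gagliardo--Nirenberg inequality. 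Absorbing the $r^2$-term alone leaves a constant source term and a bound growing linearly in $t$; your sketch does not say this.

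One slip to fix: the displayed estimate $\|\nabla e^{\tau\Delta}\nabla\cdot f\|_{L^\infty}\le c(1+\tau^{-\frac12-\frac{n}{2q}})e^{-\lambda_1\tau}\|f\|_{L^q}$ is false. Two derivatives cost $\tau^{-1-\frac{n}{2q}}$, which is not integrable at $\tau=0$. You must use your parenthetical alternative, applying $\nabla e^{\tau\Delta}$ to $F=\nabla u\cdot\nabla w+u\Delta w\in L^p$, exactly as the paper does. This is also why $\sup\|\nabla u\|_{L^\infty}$ appears on the right-hand side of your Step 2 inequality.
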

\begin{proof}
Let $M>0$ be a constant satisfying $\|v_0\|_{L^p}<M$ (its value will be specified later), and define
\begin{equation}\label{Tdef}
    \widehat{T}:= \sup \big\{ \widetilde{T} \in (0, T_{\max}) \,\big|\, \|v(\cdot, t)\|_{L^p} < M \text{ for all } t \in (0, \widetilde{T}) \big\}.
\end{equation}
Since $\|v(\cdot, 0)\|_{L^p} < M$ and the map $t \mapsto \|v(\cdot, t)\|_{L^p}$ is continuous, the quantity $\widehat{T}$ is positive and well defined. We shall show $\widehat{T}=T_{\operatorname{max}}$ under the smallness assumption on $r$. 

Without loss of generality, assume that $0<r \le  1$.  Throughout the proof, the constants $c_i$ appearing below are independent of $M$ and $r$.  By \eqref{appu_infty}, there exists a constant $K_1>0$ independent of $M$ and $r$ such that 
\begin{equation}\label{appu_infty2}
\sup_{t \in (0, T_{\max})} \|u(\cdot, t)\|_{L^\infty} \le K_1(r^{\frac{n+2}{2}} + 1) \le 2K_1.  
\end{equation}
Multiplying the second equation of \eqref{system} by $pv^{p-1}$, then integrating the results by parts, and applying  Young's inequality as well as the identity $\big|\nabla v^{\frac{p}{2}}\big|^2 = \frac{p^2}{4}v^{p-2}|\nabla v|^2$, we derive
\begin{equation}\label{vo}
\begin{aligned}
\frac{d}{dt} \int_\Omega v^p + \frac{4(p-1)}{p} \int_\Omega \big| \nabla v^{\frac{p}{2}} \big|^2
&=\chi_2 p(p-1) \int_\Omega v^{p-1} \nabla v \cdot \nabla u \\   
&\leq \frac{2(p-1)}{p} \int_\Omega \big| \nabla v^{\frac{p}{2}} \big|^2+\frac{\chi_2^2p(p-1)}{2}\|\nabla u\|_{L^\infty}^2\int_\Omega v^p.
\end{aligned}
\end{equation}
By the Gagliardo-Nirenberg inequality and the fact $\int_\Omega v  \equiv \int_\Omega v_0$ (see \eqref{pb}), we have 
\begin{equation*}
\begin{aligned}
\int_\Omega v^p = \big\|v^{\frac{p}{2}}\big\|_{L^2}^2 
&\leq c_1 \big\|\nabla v^{\frac{p}{2}}\big\|_{L^2}^{\frac{2n(p-1)}{np + 2 - n}} \big\|v^{\frac{p}{2}}\big\|_{L^{\frac{2}{p}}}^{2-\frac{2n(p-1)}{np + 2 - n}} + c_{2} \big\|v^{\frac{p}{2}}\big\|_{L^{\frac{2}{p}}}^2 \leq c_{3} \left( \int_\Omega \big| \nabla v^{\frac{p}{2}} \big|^2 \right)^{\frac{n(p-1)}{np + 2 - n}} + c_{4},
\end{aligned}
\end{equation*}  
which updates \eqref{vo} as
\begin{equation}\label{vo1}
\begin{split}
\frac{d}{dt} \int_\Omega v^p + \int_\Omega v^p + \frac{2(p-1)}{p} \int_\Omega \big| \nabla v^{\frac{p}{2}} \big|^2
 &\leq A(t) \int_\Omega v^p\\
 &\leq c_3 A(t) \left( \int_\Omega \big| \nabla v^{\frac{p}{2}} \big|^2 \right)^{\frac{n(p-1)}{np + 2 - n}}+c_4A(t),
\end{split}
\end{equation}
where 
\begin{equation}\label{A}
A(t):=\frac{\chi_2^2p(p-1)}{2}\|\nabla u(\cdot,t)\|_{L^\infty}^2+1.
\end{equation}
Applying Young's inequality to \eqref{vo1} gives
\begin{equation}\label{appv1}
\begin{split}
 \frac{d}{dt} \int_\Omega v^p + \int_\Omega v^p 
&\leq c_{5} A^{\frac{np + 2 - n}{2}}(t) + c_{4}A(t).
\end{split}
\end{equation}
Then we solve \eqref{appv1} to derive 
\begin{equation}\label{vpm-1}
 \|v\|_{L^p}^p \leq \|v_0\|_{L^p}^p+e^{-t}\int_0^te^{s}\big(c_{5} A^{\frac{np + 2 - n}{2}}(s) + c_{4}A(s)\big)ds.
\end{equation}
To estimate $A(t)$, we need to consider $\|\nabla u(\cdot,t)\|_{L^\infty}$. First, the semigroup estimate implies 
$$\|\nabla e^{t\Delta} u_0\|_{L^\infty}\leq c_0\|\nabla u_0\|_{L^\infty},$$ then we take
$$M:=\|v_0\|_{L^p}+c_{5}^{\frac{1}{p}}\hat{A}^{\frac{np+2-n}{2p}}+c_{4}^{\frac{1}{p}}\hat{A}^{\frac{1}{p}}+2,$$
where 
\begin{equation}\label{hatA}
\hat{A}:=2\chi_2^2p(p-1)c_0^2(\|\nabla u_0\|_{L^\infty}+1)^2+1.
\end{equation}
By the standard $L^p$-elliptic regularity theory and the Sobolev embedding $W^{2,p}(\Omega) \hookrightarrow W^{1,\infty}(\Omega)$, and using \eqref{lw2},\eqref{Tdef} and \eqref{appu_infty2} , we deduce from the third equation of \eqref{system} that, for all $t\in(0,\widehat{T})$,
\begin{equation}\label{ndw}
\begin{split}
\|w\|_{W^{1,\infty}} \leq c_6\| w \|_{W^{2,p}} \le c_7 \|r-\lambda(u+v)w\|_{L^p}
&\leq c_8\left(r|\Omega|^{\frac{1}{p}}+\frac{\lambda(M+2K_1|\Omega|^{\frac{1}{p}})r}{\mu}\right)\\
&\leq c_9(M+1)r.
\end{split}
\end{equation}
 Applying Duhamel's formula to the first equation of \eqref{system} gives
\begin{equation*}
    u(\cdot, t) = e^{t\Delta} u_0 - \chi_1 \int_0^t e^{(t-s)\Delta} \nabla \cdot (u(\cdot, s) \nabla w(\cdot, s))ds,
\end{equation*}
hence 
\begin{equation*}
\begin{aligned}
\|\nabla u(\cdot, t)\|_{L^\infty} 
&\le \|\nabla e^{t\Delta} u_0\|_{L^\infty}  + \chi_1 \int_0^t \|\nabla e^{(t-s)\Delta} \nabla \cdot (u(\cdot, s) \nabla w(\cdot, s))\|_{L^\infty}  ds. \\
\end{aligned} 
\end{equation*}
Then we use the semigroup estimates, \eqref{appu_infty2} and \eqref{ndw} to derive that  for all $t\in(0,\hat T)$, 
\begin{equation}\label{appnu1}
\begin{aligned}
&\|\nabla u(\cdot, t)\|_{L^\infty}\\ 
&\le c_0 \|\nabla u_0\|_{L^\infty} +c_{10}\chi_1\int_0^t(1+(t-s)^{-\frac{1}{2}-\frac{n}{2p}})e^{-\lambda_1(t-s)}\|\nabla \cdot (u(\cdot, s) \nabla w(\cdot, s))\|_{L^p}ds \\
&\le c_{10}\chi_1c_9(M+1)r\bigg(|\Omega|^{\frac1p}\sup\limits_{t\in (0,\widehat{T})}\|\nabla u(\cdot,t)\|_{L^\infty}+\frac{2K_1}{c_6}\bigg)\int_0^t(1+(t-s)^{-\frac{1}{2}-\frac{n}{2p}})e^{-\lambda_1(t-s)}ds\\
&\quad + c_0 \|\nabla u_0\|_{L^\infty}\\
&\leq c_0 \|\nabla u_0\|_{L^\infty}+ c_9c_{10}c_{11}\chi_1(M+1)r\big(\sup\limits_{t\in (0,\widehat{T})}\|\nabla u(\cdot,t)\|_{L^\infty}+{2K_1}/{c_6}\big),
\end{aligned} 
\end{equation}
where we have used the fact 
$$
(1+|\Omega|^{\frac{1}{p}})\int_0^t(1+(t-s)^{-\frac{1}{2}-\frac{n}{2p}})e^{-\lambda_1(t-s)}ds\leq c_{11}.
$$
Taking $r\leq r_1=:\min\{\frac{1}{2 c_9c_{10}c_{11}\chi_1(M+1)},\frac{c_0c_6}{2K_1c_{9}c_{10}c_{11}\chi_1(M+1)},1\}$, it follows from \eqref{appnu1} that
\begin{equation}\label{appnu2}
\begin{split}
\sup_{t \in (0, \hat T)}\|\nabla u(\cdot,t)\|_{L^\infty}
&\leq 2c_0 \|\nabla u_0\|_{L^\infty}+\frac{4 K_1c_9c_{10}c_{11}\chi_1}{c_6}(M+1)r\\
&\leq 2c_0 (\|\nabla u_0\|_{L^\infty}+1).
\end{split}
\end{equation}
Hence, by the definitions of $A$ and $\hat{A}$ in \eqref{A} and \eqref{hatA}, respectively, we deduce from \eqref{vpm-1} that
\begin{equation}\label{07231}
\|v\|_{L^p}\leq \|v_0\|_{L^p}+c_{5}^{\frac{1}{p}} \hat{A}^{\frac{np + 2 - n}{2p}} + c_{4}^{\frac{1}{p}}\hat{A}^{\frac{1}{p}}< M-1,\ \forall t\in (0,\widehat{T}).
\end{equation}
If $\widehat{T} < T_{\max}$, the continuity of the norm implies $\|v(\cdot, \widehat{T})\|_{L^p} = M$, which contradicts \eqref{07231}. Therefore, we obtain $\widehat{T} = T_{\max}$ and 
\begin{equation*}
\|v(\cdot, t)\|_{L^p} \leq M - 1, \forall t \in (0, T_{\max}),
\end{equation*}
which proves \eqref{appv_p}. Finally, \eqref{appuv} directly follows from \eqref{ndw} and \eqref{appnu2}.
\end{proof}
\begin{lemma}[Global boundedness]\label{GB}
{\color{black}Let the conditions in Lemma \ref{locale} hold.} Then there exists a constant $r_1>0$ such that if $0<r\leq r_1$, system \eqref{system} admits a unique global classical solution $(u,v,w)\in  [C^0(\overline{\Omega} \times [0,\infty)) \cap C^{2,1}(\overline{\Omega} \times (0,\infty))]^2\times  C^{2,0}(\overline{\Omega} \times (0,\infty))$. Moreover, there exists a constant $K_2>0$ independent of $t$ and $r$ such that 
\begin{equation}\label{GB-1}
\|u(\cdot, t)\|_{W^{1,\infty}} + \|v(\cdot, t)\|_{L^\infty} + \|w(\cdot, t)\|_{W^{1,\infty}} \le K_2 \quad \text{for all } t > 0.
\end{equation}
\end{lemma}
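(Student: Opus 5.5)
The plan is to combine Lemma \ref{appui} and Lemma \ref{appvp} with one more semigroup step for $v$, and then use the extension criterion \eqref{bu}. We take $r_1$ to be the constant from Lemma \ref{appvp}, which already lies in $(0,1)$, and fix some $p>n$. For $0<r\le r_1$ we already know three bounds on $(0,T_{\max})$, all independent of $t$ and $r$. First, $\|u\|_{L^\infty}\le 2K_1$ by \eqref{appu_infty2}. Second, $\|v\|_{L^p}\le C_2$ by \eqref{appv_p}. Third, $\|\nabla u\|_{L^\infty}+\|w\|_{W^{1,\infty}}\le C_3$ by \eqref{appuv}. So the only missing ingredient is an $L^\infty$ bound for $v$.

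To get it, we write the second equation of \eqref{system} in Duhamel form:
\begin{equation*}
v(\cdot,t)=e^{t\Delta}v_0-\chi_2\int_0^t e^{(t-s)\Delta}\nabla\cdot\big(v(\cdot,s)\nabla u(\cdot,s)\big)\,ds .
\end{equation*}
We then apply the standard estimate for $e^{\tau\Delta}\nabla\cdot$ from $L^p$ to $L^\infty$, the same one used in \eqref{phi2}. This gives a factor $(1+\tau^{-\frac12-\frac{n}{2p}})e^{-\lambda_1\tau}$. Together with $\|v\nabla u\|_{L^p}\le \|v\|_{L^p}\|\nabla u\|_{L^\infty}\le C_2C_3$, we obtain
\begin{equation*}
\|v(\cdot,t)\|_{L^\infty}\le \|v_0\|_{L^\infty}+c\,\chi_2 C_2C_3\int_0^\infty\big(1+\tau^{-\frac12-\frac{n}{2p}}\big)e^{-\lambda_1\tau}\,d\tau .
\end{equation*}
The integral is finite because $p>n$ makes the exponent $\frac12+\frac{n}{2p}$ less than $1$. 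One subtlety is that the semigroup estimate with the decay factor $e^{-\lambda_1\tau}$ requires zero-mean data. This is automatic here: $\nabla\cdot(v\nabla u)$ has zero mean by the Neumann condition, so the estimate applies as written.

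This yields a bound $\|v(\cdot,t)\|_{L^\infty}\le K$ that is uniform in $t$ and $r$. Since $\|u\|_{W^{1,p}}\le c\|u\|_{W^{1,\infty}}$, we then have
\begin{equation*}
\|u(\cdot,t)\|_{W^{1,p}}+\|v(\cdot,t)\|_{L^\infty}\le K' \quad\text{on } (0,T_{\max}),
\end{equation*}
with $K'$ independent of $t$ and $r$. The blow-up criterion \eqref{bu} in Lemma \ref{locale} then forces $T_{\max}=\infty$. The regularity classes and uniqueness are inherited from Lemma \ref{locale}. Collecting the four bounds gives \eqref{GB-1} with $K_2:=2K_1+C_3+K$.

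The main obstacle has already been handled in Lemma \ref{appvp}: the bootstrap controlling $\|\nabla u\|_{L^\infty}$ is where the smallness of $r$ enters. What remains is checking that every constant is independent of $r$. The point to watch is $K_1$, which came from $r^{\frac{n+2}{2}}+1\le 2$ for $r\le1$. The other constants depend only on the initial data and the fixed parameters.
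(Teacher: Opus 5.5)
Your proposal is correct and follows the paper's proof essentially step for step. You take $r_1$ from Lemma~\ref{appvp}, bound $\|v(\cdot,t)\|_{L^\infty}$ through the Duhamel formula and the $L^p\to L^\infty$ estimate for $e^{\tau\Delta}\nabla\cdot$ with $\|v\nabla u\|_{L^p}\le C_2C_3$, and close with the extension criterion \eqref{bu} together with \eqref{appu_infty2} and \eqref{appuv}. Your remarks on the zero-mean structure of $\nabla\cdot(v\nabla u)$ and on the $r$-independence of $K_1$ are accurate, and they make explicit points the paper leaves implicit.
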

\begin{proof}
{\color{black}By Lemma \ref{appvp}, we know there exists a constant $r_1>0$ such that if $0<r\leq r_1$,  \eqref{appv_p} and \eqref{appuv} hold for all  $t\in(0, T_{\max})$.} We now estimate the $L^\infty$-norm of $v$. Applying Duhamel’s formula to the second equation of \eqref{system} gives
\begin{equation*}
 v(\cdot, t) =  e^{t\Delta} v_0 - \chi_2 \int_0^t  e^{(t-s)\Delta} \nabla \cdot (v(\cdot, s) \nabla u(\cdot, s)) ds,    
\end{equation*}
which, together with the semigroup estimate and \eqref{appuv}, implies
\begin{equation}\label{appvinfty}
\begin{aligned}
\|v(\cdot, t)\|_{L^\infty} 
&\le \|e^{t\Delta} v_0\|_{L^\infty} + \chi_2 \int_0^t \big\| e^{(t-s)\Delta} \nabla \cdot (v(\cdot, s) \nabla u(\cdot, s)) \big\|_{L^\infty} ds \\
&\le \|v_0\|_{L^\infty} + c_2 \chi_2 \int_0^t \big(1 + (t-s)^{-\frac{1}{2} - \frac{n}{2p}}\big) e^{-\lambda_1(t-s)} \|v(\cdot, s)\|_{L^p} \|\nabla u(\cdot, s)\|_{L^\infty} ds \\
&\le \|v_0\|_{L^\infty} + c_2 \chi_2 C_2C_3 \int_0^\infty \big(1 + s^{-\frac{1}{2} - \frac{n}{2p}}\big) e^{-\lambda_1 s} ds \leq c_3.
\end{aligned}
\end{equation}
Hence the combination of \eqref{bu},  \eqref{appuv}, \eqref{appu_infty2} with \eqref{appvinfty} finishes the proof of Lemma \ref{GB}.
\end{proof}

\begin{lemma}\label{lem:holder}
{\color{black}Let $(u, v, w)$ be the nonnegative global classical solution of \eqref{system} obtained in Lemma \ref{GB}.} Then there exists a constant $C_{4} > 0$ independent of $t$ and $r$ such that
\begin{equation}\label{conclu_uv}
\|v\|_{C^{\alpha, \frac{\alpha}{2}}(\bar{\Omega}\times[t, t+1])} \le C_{4},  \quad \forall t \ge 1
\end{equation}
for some constant $\alpha\in(0,1).$
\end{lemma}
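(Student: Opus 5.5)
We view the second equation of \eqref{system} as a linear parabolic equation in divergence form,
\begin{equation*}
v_t=\nabla\cdot\big(\nabla v - \chi_2 v\nabla u\big),
\end{equation*}
with Neumann data. The drift $\nabla u$ is already bounded in $L^\infty$ uniformly in time by Lemma \ref{GB}. The plan is to invoke a standard interior-and-boundary H\"older regularity result for bounded weak solutions of such equations, namely the Porzio--Vespri theorem, or equivalently Ladyzhenskaya--Solonnikov--Ural'tseva, Chapter III/V. That result gives a H\"older bound on cylinders $\overline{\Omega}\times[t,t+1]$ depending only on three things: the $L^\infty$ bound of the solution, the $L^\infty$ bound of the coefficient in the flux, and the distance to the initial time. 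This is why we restrict to $t\ge1$: we apply the estimate on $\overline{\Omega}\times[t-1,t+1]$ and read off the conclusion on $[t,t+1]$.

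Concretely, we write the equation as $v_t=\nabla\cdot a(x,t,v,\nabla v)+b(x,t,v,\nabla v)$, with
\begin{equation*}
a=\nabla v-\chi_2 v\nabla u,\qquad b=0,\qquad a\cdot\nu=0 \text{ on }\partial\Omega .
\end{equation*}
We then verify the structural conditions. For coercivity, $a\cdot\nabla v\ge \frac12|\nabla v|^2-\frac{\chi_2^2}{2}v^2|\nabla u|^2\ge \frac12|\nabla v|^2-\psi_0$, where $\psi_0:=\frac{\chi_2^2}{2}K_2^4$ by \eqref{GB-1}. For growth, $|a|\le|\nabla v|+\chi_2K_2^2$. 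Both constants are independent of $t$, and they are independent of $r$ once $r\le r_1$, since $K_2$ is independent of $r$. Together with the bound $\|v\|_{L^\infty(\Omega\times(0,\infty))}\le K_2$, the cited theorem then yields $\alpha\in(0,1)$ and $C_4$, both depending only on these data and on $\Omega$, such that \eqref{conclu_uv} holds for all $t\ge1$.

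The main point to watch is uniformity in $r$ and $t$. This depends on the fact that the constant $K_2$ from Lemma \ref{GB} is independent of both, and that the H\"older estimate depends only on the structural constants, not on smoothness of the coefficients. A secondary technical point is boundary regularity under the conormal Neumann condition. We handle it by using the version of the theorem that covers conormal boundary conditions, which is exactly the setting of Porzio--Vespri.

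If one also wants H\"older regularity of $u$ (not stated here), the same argument applies to $u_t=\nabla\cdot(\nabla u-\chi_1u\nabla w)$ using the bound on $\nabla w$. Alternatively, it follows directly from the $W^{1,\infty}$ bound on $u$ together with a time-H\"older estimate obtained by testing the equation.
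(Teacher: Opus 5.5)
Your proposal is correct and matches the paper's proof: you write the $v$-equation in divergence form with flux $\nabla v-\chi_2 v\nabla u$, check the coercivity and growth bounds via \eqref{GB-1} with constants $\frac{\chi_2^2K_2^4}{2}$ and $\chi_2K_2^2$, and apply Porzio--Vespri (including the version covering the Neumann boundary) for $t\ge1$. Your remark that uniformity in $t$ and $r$ comes entirely from $K_2$ being independent of both is the same point the paper relies on.
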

\begin{proof}
We rewrite the second equation of \eqref{system} as
$$ v_t = \nabla \cdot A_1(x, t, v, \nabla v),$$
where $A_1(x, t, v, \nabla v) = \nabla v - \chi_2 v \nabla u.$ By \eqref{GB-1}, we have
\begin{equation}\label{A1_cond1}
\begin{aligned}
A_1(x, t, v, \nabla v) \cdot \nabla v &= (\nabla v - \chi_2 v \nabla u) \cdot \nabla v 
\ge \frac{1}{2}|\nabla v|^2 - \frac{\chi_2^2}{2} v^2 |\nabla u|^2 
\ge \frac{1}{2}|\nabla v|^2 - \frac{K_2^4\chi_2^2}{2},
\end{aligned}
\end{equation}
and
\begin{equation}\label{A1_cond2}
|A_1(x, t, v, \nabla v)| \le |\nabla v| + \chi_2 |v| |\nabla u| \le |\nabla v| + K_2^2\chi_2.
\end{equation}
With \eqref{A1_cond1} and \eqref{A1_cond2}, we apply the H\"{o}lder regularity for quasilinear parabolic equations \cite[Theorem 1.3 and Remark 1.4]{PV-JDE-1993} to obtain $\|v\|_{C^{\alpha, \frac{\alpha}{2}}(\bar{\Omega}\times[t, t+1])} \le c_3$ for all $t\geq 1$. Hence we complete the proof of Lemma \ref{lem:holder}.
\end{proof}
\begin{lemma}[Global stabilization]\label{GST=0}
Let $(u,v,w)$ be the solution obtained in Lemma \ref{GB}. Then there exists a small constant $r_2>0$ such that  when $0<r\leq r_2$, it holds that
\begin{equation}\label{GST=0-1}
\|u(\cdot,t)-\bar{u}_0\|_{L^\infty}+\|v(\cdot,t)-\bar{v}_0\|_{L^\infty}+\|w(\cdot,t)-w_c\|_{L^\infty}\leq C_5e^{-\kappa_1 t}, \ \forall t\geq1,
\end{equation}
for some constants $C_5$ and $\kappa_1$ independent of $t$ and $r.$
\end{lemma}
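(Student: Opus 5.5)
The plan is to build an energy/Lyapunov functional that exploits the smallness of $r$, in the same spirit as Lemma \ref{appvp}. First I would get decay of $u-\bar u_0$ in $L^2$. Testing the first equation of \eqref{system} with $u-\bar u_0$ gives
\begin{equation*}
\frac12\frac{d}{dt}\int_\Omega (u-\bar u_0)^2+\int_\Omega|\nabla u|^2=\chi_1\int_\Omega u\nabla u\cdot\nabla w\le \chi_1\|u\|_{L^\infty}\|\nabla w\|_{L^2}\|\nabla u\|_{L^2}.
\end{equation*}
To control $\|\nabla w\|_{L^2}$ I will use the elliptic equation for $w-\overline{w}$. Testing $d\Delta w=\lambda(u+v)w+\mu w-r$ with $w-\bar w$ yields $d\|\nabla w\|_{L^2}^2\le \lambda\|w\|_{L^\infty}\int_\Omega |u+v-\bar u_0-\bar v_0||w-\bar w|+\mu\|w-\bar w\|_{L^2}^2\cdot 0+\dots$, and more cleanly, after also using Poincaré, an estimate of the form
\begin{equation*}
\|\nabla w\|_{L^2}\le c\,r\big(\|u-\bar u_0\|_{L^2}+\|v-\bar v_0\|_{L^2}\big),
\end{equation*}
since $0\le w\le r/\mu$ by \eqref{lw2}. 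The key point is that the forcing constant is proportional to $r$, and $\|u\|_{L^\infty}$ is bounded independently of $r$ by Lemma \ref{GB}.

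Next I would treat $v$. Testing the second equation with $v-\bar v_0$ gives
\begin{equation*}
\frac{1}{2}\frac{d}{dt}\int_\Omega (v-\bar v_0)^2+\int_\Omega|\nabla v|^2=\chi_2\int_\Omega v\nabla u\cdot\nabla v\le \frac12\int_\Omega|\nabla v|^2+\frac{\chi_2^2K_2^2}{2}\int_\Omega|\nabla u|^2.
\end{equation*}
Here $\|\nabla u\|_{L^2}^2$ is not small in itself, so I combine the two estimates with a large weight on the $u$-part. I set $E(t)=B\int_\Omega(u-\bar u_0)^2+\int_\Omega(v-\bar v_0)^2$ with $B=2\chi_2^2K_2^2$, so that $B\int_\Omega|\nabla u|^2$ absorbs the $v$-coupling term. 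The remaining cross term is
\begin{equation*}
B\chi_1K_2\|\nabla w\|_{L^2}\|\nabla u\|_{L^2}\le cBr\big(\|u-\bar u_0\|_{L^2}+\|v-\bar v_0\|_{L^2}\big)\|\nabla u\|_{L^2}.
\end{equation*}
By Young and Poincaré it is bounded by $\tfrac B4\|\nabla u\|_{L^2}^2+cBr^2 C_P(\|\nabla u\|_{L^2}^2+\|\nabla v\|_{L^2}^2)$. Choosing $r\le r_2$ small, where $r_2$ depends on $B$ and hence only on $K_2$, which is $r$-independent, absorbs this term. Poincaré then gives $E'+\kappa E\le0$, so $E(t)\le E(1)e^{-\kappa(t-1)}$.

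Finally I upgrade $L^2$ decay to $L^\infty$ decay. For $u$, the bound $\|u\|_{W^{1,\infty}}\le K_2$ from Lemma \ref{GB} together with Gagliardo–Nirenberg, $\|u-\bar u_0\|_{L^\infty}\le c\|u-\bar u_0\|_{W^{1,\infty}}^{n/(n+2)}\|u-\bar u_0\|_{L^2}^{2/(n+2)}$, gives exponential decay. For $v$ the bounds are only $L^\infty$ plus the Hölder bound of Lemma \ref{lem:holder}, and interpolating Hölder with $L^2$ gives $L^\infty$ decay with a reduced rate. For $w$, the difference $w-w_c$ satisfies $d\Delta(w-w_c)-(\lambda(u+v)+\mu)(w-w_c)=\lambda w_c(u+v-\bar u_0-\bar v_0)$, and the maximum principle gives $\|w-w_c\|_{L^\infty}\le \frac{\lambda w_c}{\mu}\|u+v-\bar u_0-\bar v_0\|_{L^\infty}$.

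I expect the main obstacle to be the $\|\nabla w\|_{L^2}$ estimate. It must produce a factor $r$ times the deviations, rather than just $\|\nabla w\|\lesssim r$, because a bare $O(r)$ bound would not decay. I would obtain it by writing $d\Delta w-Jw+r=\lambda w(u+v-\bar u_0-\bar v_0)$, noting that $w_c$ solves the homogeneous version, so $d\Delta(w-w_c)-J(w-w_c)=\lambda w(\cdots)$. Testing this with $-\Delta(w-w_c)$ or with $w-w_c$ and using $w\le r/\mu$ gives the required bound.
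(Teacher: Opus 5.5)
Your proposal is correct and follows essentially the paper's argument: the same weighted functional with $B=2\chi_2^2K_2^2$, the elliptic estimate for $w-w_c$ supplying the factor $r^2$, absorption for small $r$, Poincar\'e and Gronwall, and finally interpolation of the $L^2$ decay against Lipschitz/H\"older bounds. The differences are minor. You insert the bound on $\|\nabla w\|_{L^2}$ directly instead of adding the weighted $w$-inequality to the functional, and you obtain the $L^\infty$ decay of $w-w_c$ from the maximum principle rather than by interpolation, which is slightly cleaner.
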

\begin{proof}
We multiply the first equation in \eqref{system} by $(u-\bar{u}_0)$, and integrate the results by parts, then use Young's inequality and \eqref{GB-1} to derive
\begin{equation*}
\begin{split}
\frac{1}{2}\frac{d}{dt}\int_\Omega (u-\bar{u}_0)^2 +\int_\Omega |\nabla (u-\bar{u}_0)|^2
&=\chi_1\int_\Omega u\nabla w\cdot \nabla (u-\bar{u}_0)\\
&\leq \frac{1}{2}\int_\Omega |\nabla (u-\bar{u}_0)|^2 +\frac{\chi_1^2K_2^2}{2}\int_\Omega |\nabla w|^2,
\end{split}
\end{equation*}
which implies 
\begin{equation}\label{GST=0-2}
\frac{d}{dt}\int_\Omega (u-\bar{u}_0)^2 +\int_\Omega |\nabla (u-\bar{u}_0)|^2\leq \chi_1^2K_2^2\int_\Omega |\nabla w|^2.
\end{equation}
Similarly, we deduce from the $v$-equation in \eqref{system} that
\begin{equation}\label{GST=0-3}
\frac{d}{dt}\int_\Omega (v-\bar{v}_0)^2 +\int_\Omega |\nabla (v-\bar{v}_0)|^2
\leq \chi_2^2K_2^2\int_\Omega |\nabla u|^2.
\end{equation}
Multiplying the third equation in \eqref{system} by $w-w_c$ and  integrating it by parts, we have
\begin{equation*}
\begin{split}
d\int_\Omega |\nabla (w-w_c)|^2+\int_\Omega [\mu+\lambda(u+v)](w-w_c)^2
&= -\lambda w_c\int_\Omega [(u-\bar{u}_0)+(v-\bar{v}_0)](w-w_c)\\
&\leq \frac{\mu}{2}\int_\Omega (w-w_c)^2+\frac{\lambda^2 w_c^2}{\mu}\int_\Omega [(u-\bar{u}_0)^2+(v-\bar{v}_0)^2],
\end{split}
\end{equation*}
{\color{black}which, along with the fact that $r=\lambda (\bar{u}_0+\bar{v}_0)w_c+\mu w_c$, gives} 
\begin{equation}\label{GST=0-4}
d\int_\Omega |\nabla (w-w_c)|^2+\frac{\mu}{2}\int_\Omega (w-w_c)^2\leq c_0r^2\int_\Omega [(u-\bar{u}_0)^2+(v-\bar{v}_0)^2],
\end{equation}
where $c_0:=\frac{\lambda^2 }{\mu[\lambda(\bar{u}_0+\bar{v}_0)+\mu]^2}$.

On the other hand, since $\int_\Omega (u-\bar{u}_0)=\int_\Omega(v-\bar{v}_0)=0$, we apply the Poincar\'e inequality to find a constant $c_p>0$ satisfying
\begin{equation}\label{poincare-A}
\int_\Omega |u-\bar{u}_0|^2 \le c_p \int_\Omega |\nabla (u-\bar{u}_0)|^2 \quad \text{and} \quad \int_\Omega |v-\bar{v}_0|^2 \le c_p \int_\Omega |\nabla (v-\bar{v}_0)|^2.
\end{equation}
Multiplying \eqref{GST=0-2} and \eqref{GST=0-4} by $2\chi_2^2 K_2^2$ and  $\frac{4\chi_1^2 \chi_2^2 K_2^4}{d}$, respectively, and adding the results to \eqref{GST=0-3}, then using \eqref{poincare-A}, we obtain
\begin{equation}\label{GST=0-5}
\begin{split}
&\frac{d}{dt} \left( 2\chi_2^2 K_2^2 \int_\Omega (u-\bar{u}_0)^2 + \int_\Omega (v-\bar{v}_0)^2 \right) \\
&\leq -c_1(r) \int_\Omega (u-\bar{u}_0)^2- c_2(r) \int_\Omega (v-\bar{v}_0)^2  - \frac{2\mu \chi_1^2 \chi_2^2 K_2^4}{d} \int_\Omega (w-w_c)^2,
\end{split}
\end{equation}
where $c_1(r)$ and $c_2(r)$ are defined as
\begin{equation*}
c_1(r) := \frac{\chi_2^2 K_2^2}{c_p} - \frac{4c_0 r^2 \chi_1^2 \chi_2^2 K_2^4}{d}, \quad 
c_2(r) := \frac{1}{c_p} - \frac{4c_0 r^2 \chi_1^2 \chi_2^2 K_2^4}{d}.
\end{equation*}
Then there exists a small constant $r_2\in(0,r_1)$ such that for all $0<r\leq r_2$, it holds that
\begin{equation*}
c_1(r) \geq \frac{\chi_2^2 K_2^2}{2c_p} \quad \text{and} \quad c_2(r) \geq \frac{1}{2c_p},
\end{equation*}
which, substituted into \eqref{GST=0-5}, gives
\begin{equation}\label{GST=0-6}
\begin{split}
\frac{d}{dt} \left( 2\chi_2^2 K_2^2 \int_\Omega (u-\bar{u}_0)^2 + \int_\Omega (v-\bar{v}_0)^2 \right)+ \frac{\chi_2^2 K_2^2}{2c_p} \int_\Omega (u-\bar{u}_0)^2 + \frac{1}{2c_p} \int_\Omega (v-\bar{v}_0)^2  \leq 0.
\end{split}
\end{equation}
Let $E(t) := 2\chi_2^2 K_2^2 \int_\Omega (u-\bar{u}_0)^2 + \int_\Omega (v-\bar{v}_0)^2$ and  $\kappa_1 := \frac{1}{4c_p}$, we derive from \eqref{GST=0-6} that
\begin{equation*}
\frac{d}{dt}E(t) + \kappa_1 E(t) \leq 0.
\end{equation*}
Applying Gronwall's inequality, we immediately obtain
\begin{equation}\label{GST=L2-uv}
\|u(\cdot,t)-\bar{u}_0\|_{L^2} + \|v(\cdot,t)-\bar{v}_0\|_{L^2} \leq c_3 e^{-\frac{\kappa_1}{2}t},
\end{equation}
 which, together with \eqref{GST=0-4}, implies
\begin{equation}\label{GST=L2-w}
\|w(\cdot,t)-w_c\|_{L^2} \leq c_4e^{-\frac{\kappa_1}{2} t}.
\end{equation}
Recalling the standard interpolation inequality (see \cite[(3.62)]{JinLW-CVPDE-2025}), for some positive constant $C_I$, it holds that
\begin{equation}\label{interpolation}
\|f\|_{L^\infty(\Omega)} \leq C_I \|f\|_{C^\alpha(\bar{\Omega})}^{\frac{n}{n+\alpha}} \|f\|_{L^1(\Omega)}^{\frac{\alpha}{n+\alpha}} \quad \text{for } f \in L^1(\Omega) \cap C^\alpha(\bar{\Omega}).
\end{equation}
By the fact that $W^{1,\infty}(\Omega) \hookrightarrow C^\alpha(\bar{\Omega})$ and \eqref{GB-1}, we obtain
\begin{equation}\label{holder-uw}
    \sup_{t\geq 1}(\|u(\cdot,t)\|_{C^{\alpha}}+\|w(\cdot,t)\|_{C^{\alpha}})\leq c_5
\end{equation}
Therefore, applying \eqref{interpolation} to $u-\bar{u}_0$ and noting \eqref{GST=L2-uv} with \eqref{holder-uw}, we obtain
\begin{equation}\label{GST=Linf-u}
\begin{aligned}
\|u(\cdot,t)-\bar{u}_0\|_{L^\infty}
&\le c_6
\|u(\cdot,t)-\bar{u}_0\|_{C^\alpha}^{\frac{n}{n+\alpha}}
\|u(\cdot,t)-\bar{u}_0\|_{L^1}^{\frac{\alpha}{n+\alpha}}\\
&\le c_6 |\Omega|^{\frac{\alpha}{2(n+\alpha)}}
\|u(\cdot,t)-\bar{u}_0\|_{C^\alpha}^{\frac{n}{n+\alpha}}
\|u(\cdot,t)-\bar{u}_0\|_{L^2}^{\frac{\alpha}{n+\alpha}}\\
&\le c_7 e^{-\kappa_2 t},\quad \forall t\ge1,
\end{aligned}
\end{equation}
where $\kappa_2 := \frac{\alpha \kappa_1}{2(n+\alpha)}$. Similarly, applying \eqref{conclu_uv} and \eqref{interpolation}, we deduce from \eqref{GST=L2-uv} and \eqref{GST=L2-w} that
\begin{equation*}
\|v(\cdot,t)-\bar{v}_0\|_{L^\infty} + \|w(\cdot,t)-w_c\|_{L^\infty} \leq c_8 e^{-\kappa_2 t}, \quad \forall t \geq 1,
\end{equation*}
which together with \eqref{GST=Linf-u} yields \eqref{GST=0-1}. This completes the proof.
\end{proof}
\begin{proof}[{\bf Proof of Theorem \ref{global_tau0_app}}] Theorem \ref{global_tau0_app} follows from Lemma \ref{GB} and Lemma \ref{GST=0}.
\end{proof}

\section{Hopf bifurcation and steady state patterns: Proof of Theorem \ref{NPS}}\label{LB} 
This section will show the absence of Hopf bifurcation and non-constant steady states.
\subsection{Spectrum of the linearized operator} In this subsection,  we shall establish the linear stability of $(\bar{u}_0,\bar{v}_0,w_c)$ by analyzing the spectrum of the non-local linearized operator. Before proceeding, we denote
$$X:=\{\phi\in W^{2,p}(\Omega): \ \nabla\phi\cdot\nu=0\ \text{on}\ \partial\Omega\},\ Y_0:=\Big\{\phi\in L^p(\Omega):\int_\Omega \phi=0\Big\}, \  X_0:=X\cap Y_0,$$
where {\color{black}$p>\max\{n,2\}$}. Motivated by \eqref{le*0-1}, we define a linear operator 
$$\mathcal{S}:D(\mathcal{S})=X_0\times X_0 \subset Y_0\times Y_0\rightarrow Y_0\times Y_0$$
by
\begin{equation}\label{ML}
\mathcal S
\begin{pmatrix}
		\phi  \\
		\psi \\
	\end{pmatrix}
=\begin{pmatrix}
		\Delta+\chi_1\lambda\bar u_0w_c\Delta(-d\Delta+J)^{-1} & \chi_1\lambda\bar u_0w_c\Delta(-d\Delta+J)^{-1} \\
		-\chi_2\bar v_0\Delta & \Delta \\
	\end{pmatrix}\begin{pmatrix}
		\phi  \\
		\psi \\
	\end{pmatrix}.
\end{equation}
Indeed, one can check that $$\Delta\in\mathcal L(X_0,Y_0), \ \ \ (-d\Delta+J)^{-1}\in\mathcal L(Y_0,X_0),\ \ \  \Delta(-d\Delta+J)^{-1}\in\mathcal L(Y_0,Y_0).$$ 
Therefore, $\mathcal S$ is well defined.  Here, \(\mathcal L(Z,S)\) denotes 
$$\mathcal{L}(Z,S):=\{ A| A ~\text{is~a~bounded~linear~operator~from}~Z~\text{to}~S\}$$
with $Z,S$ being normed linear spaces. {\color{black}However, the nonlocal structure $\Delta(-d\Delta+J)^{-1}$ of $\mathcal S$ prevents a direct application of the classical spectral theory for the Neumann Laplacian. To develop an appropriate spectral framework,} we therefore establish the following properties of $\mathcal S$:
\begin{itemize}
\item $\mathcal S$ is closed and $D(\mathcal{S})$ is dense in $Y_0\times Y_0$ (see Lemma \ref{cp});
\item $\mathcal S$ generates an analytic $C_0$-semigroup (see Lemma \ref{analytic}), so that its spectral bound determines the exponential growth of the linearized flow;
\item $\mathcal S$ has compact resolvent, so that the spectrum of $\mathcal S$ consists only of eigenvalues with finite algebraic multiplicity (see Lemma \ref{spectrum}).
\end{itemize}
\vspace{1.5mm}
We first show that the linear operator $\mathcal{S}$ is closed and $D(\mathcal S)$ is dense in $Y_0\times Y_0$.
\begin{lemma}\label{cp}
Let the linear operator $\mathcal{S}$ be defined in \eqref{ML}. Then the linear operator $\mathcal{S}$ is closed.  Moreover, the domain $D(\mathcal S)$ is dense in $Y_0\times Y_0$.
\end{lemma}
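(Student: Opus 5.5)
The plan is to decompose $\mathcal S=\mathcal A+\mathcal B$. Here
\[
\mathcal A:=\begin{pmatrix}\Delta&0\\ -\chi_2\bar v_0\Delta&\Delta\end{pmatrix},\qquad
\mathcal B:=\begin{pmatrix}K&K\\0&0\end{pmatrix},\qquad K:=\chi_1\lambda\bar u_0w_c\,\Delta(-d\Delta+J)^{-1}.
\]
The facts already noted after \eqref{ML} give $K\in\mathcal L(Y_0,Y_0)$. Hence $\mathcal B\in\mathcal L(Y_0\times Y_0)$ is bounded. A bounded perturbation of a closed operator is closed on the same domain, so closedness of $\mathcal S$ reduces to closedness of $\mathcal A$ with domain $X_0\times X_0$.

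For $\mathcal A$ I would use the standard $L^p$ theory of the Neumann Laplacian. On $Y_0$, the operator $\Delta:X_0\to Y_0$ is a bijection. This holds because $-\Delta u=f$ with Neumann data is solvable exactly when $\int_\Omega f=0$, and the solution is unique in the mean-zero class. By elliptic $L^p$ estimates and the Poincaré inequality on $Y_0$, we get $\|\phi\|_{W^{2,p}}\le C\|\Delta\phi\|_{L^p}$ for $\phi\in X_0$.

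Now take $(\phi_k,\psi_k)\in X_0\times X_0$ with $(\phi_k,\psi_k)\to(\phi,\psi)$ and $\mathcal A(\phi_k,\psi_k)\to(f,g)$ in $Y_0\times Y_0$. Then $\Delta\phi_k\to f$, so $\phi_k$ is Cauchy in $W^{2,p}$ by the estimate above. Also $\Delta\psi_k=(\mathcal A(\phi_k,\psi_k))_2+\chi_2\bar v_0\Delta\phi_k$ converges, so $\psi_k$ is Cauchy in $W^{2,p}$ as well. Since $X_0$ is closed in $W^{2,p}$ (the trace of the gradient is continuous for $p>n$, and the mean-zero condition is preserved), the limits lie in $X_0$ and coincide with $\phi,\psi$. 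By continuity, $\mathcal A(\phi,\psi)=(f,g)$. Equivalently, $\mathcal A$ is a bijection $X_0^2\to Y_0^2$ with bounded inverse: its inverse is lower triangular, built from $\Delta^{-1}$. An operator with a bounded inverse is closed.

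For density, it suffices that $X_0$ is dense in $Y_0$. The space $C_c^\infty(\Omega)$ is dense in $L^p$, and every element of it lies in $X$. Given $f\in Y_0$, approximate it by $g_k\in C_c^\infty(\Omega)$. Fix $\eta\in C_c^\infty(\Omega)$ with $\int_\Omega\eta=1$ and set $\tilde g_k:=g_k-(\int_\Omega g_k)\eta\in X_0$. Since $\int_\Omega g_k\to\int_\Omega f=0$, we get $\tilde g_k\to f$ in $L^p$. Taking products gives density of $D(\mathcal S)$ in $Y_0\times Y_0$.

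The only step needing care is the a priori bound $\|\phi\|_{W^{2,p}}\le C\|\Delta\phi\|_{L^p}$ on $X_0$. The standard estimate is $\|\phi\|_{W^{2,p}}\le C(\|\Delta\phi\|_{L^p}+\|\phi\|_{L^p})$. I would remove the lower-order term by the usual contradiction-compactness argument: a normalized sequence with $\Delta\phi_k\to0$ converges, via the compact embedding $W^{2,p}\hookrightarrow L^p$, to a mean-zero harmonic Neumann function. Such a function is identically $0$, contradicting the normalization.
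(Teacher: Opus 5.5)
Your proposal is correct and follows essentially the same route as the paper. Like the paper, you split $\mathcal S$ into the lower-triangular Laplacian part $L_0$ plus the bounded nonlocal part $P_0$. You prove $L_0$ closed by showing approximating sequences are Cauchy in $W^{2,p}$ via $\|\phi\|_{W^{2,p}}\le C\|\Delta\phi\|_{L^p}$ on $X_0$, passing the Neumann condition and zero mean to the limit. You then conclude by bounded perturbation, and obtain density from mean-zero $C_c^\infty(\Omega)$ functions. The differences are minor: you use continuity of the gradient trace where the paper uses a trace interpolation inequality, and you justify two facts the paper only asserts (the compactness argument removing the $\|\phi\|_{L^p}$ term, and the mean correction $g_k-(\int_\Omega g_k)\eta$).
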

\begin{proof}
Define $L_0: D(L_0)=X_0\times X_0\subset Y_0\times Y_0\rightarrow Y_0\times Y_0 $ and $P_0: D(P_0)=Y_0\times Y_0\rightarrow Y_0\times Y_0$, where
$$L_0:=\begin{pmatrix}
		\Delta & 0 \\
		-\chi_2\bar{v}_0\Delta & \Delta \\
	\end{pmatrix}, \ \ 
    P_0:=\begin{pmatrix}
    \chi_1\lambda\bar u_0w_c\Delta(-d\Delta+J)^{-1}&\chi_1\lambda\bar u_0w_c\Delta(-d\Delta+J)^{-1}\\
    0 &0\\
    \end{pmatrix}.
$$
Then $\mathcal{S}:=L_0+P_0\big|_{X_0\times X_0}.$ Let $h:=(h_1,h_2)^\mathcal{T}$,  and denote
$$\|h\|_{S\times S}=\|h_1\|_S+\|h_2\|_S,$$
where $\mathcal{T}$ denotes the transpose, $S$ denotes the Banach space.  We first prove that $\mathcal{S}$ is closed, which will be divided into three steps.

{\bf Step 1: The linear operator $L_0$ is closed}. By the definition of closed operator, it suffices to show that for any $f^i\in D(L_0)$ satisfying
\begin{equation*}
\lim\limits_{i\to\infty}(\| f^i-f\|_{Y_0\times Y_0}+\| L_0f^i-g\|_{Y_0\times Y_0})=0,
\end{equation*}
then $f\in D(L_0)~\text{and}~g=L_0 f$,
where $f^i:=(f_1^i,f_2^i)^\mathcal{T},\ \ f:=(f_1,f_2)^\mathcal{T},\ \  g:=(g_1,g_2)^\mathcal{T}.$

Using the definition of $L_0$, we can derive that $\| L_0f^i-g\|_{Y_0\times Y_0}\rightarrow 0~\text{as}~i\rightarrow\infty$ can be rewritten as
\begin{equation}\label{fact2}
\lim\limits_{i\rightarrow\infty}(\|\Delta f_1^i-g_1\|_{Y_0} + \|\Delta f_2^i-\chi_2\bar{v}_0\Delta f_1^i -g_2\|_{Y_0})=\lim\limits_{i\rightarrow\infty}\| L_0f^i-g\|_{Y_0\times Y_0}=0.
\end{equation}
Since $f_1^i,f_1^j\in X_0$, we can apply the standard $L^p$-estimate for elliptic equations to derive 
\begin{equation}\label{ellip1}
\| f_1^i-f_1^j\|_{W^{2,p}}\leq c_1 \| \Delta f_1^i-\Delta f_1^j\|_{L^p},
\end{equation}
which along with the fact \eqref{fact2} yields that $\{f_1^i\}$ is a Cauchy sequence in $W^{2,p}.$ Since the Sobolev space $W^{2,p}$ is complete, there exists a $\tilde{f}_1\in W^{2,p}$ such that 
\begin{equation}\label{fw2p}
\lim\limits_{i\rightarrow\infty}\| f_1^i-\tilde{f}_1\|_{W^{2,p}}=0,
\end{equation}
this combined with the Sobolev embedding $W^{2,p}\hookrightarrow L^p$ yields 
\begin{equation}\label{flp}
\lim\limits_{i\rightarrow\infty}\| f_1^i-\tilde{f}_1\|_{L^p}=0, 
\end{equation}
hence $\tilde{f}_1=f_1$. Similarly, using \eqref{ellip1}, one has
\begin{equation*}
\| f_2^i-f_2^j\|_{W^{2,p}}\leq c_2 \| (\Delta f_2^i-\chi_2\bar{v}_0\Delta f_1^i) -(\Delta f_2^j-\chi_2\bar{v}_0\Delta f_1^j)+\chi_2\bar{v}_0(\Delta f_1^i-\Delta f_1^j)\|_{L^p},
\end{equation*}
which, together with \eqref{fact2} and \eqref{ellip1}, implies that $\{f_2^i\}$ is a Cauchy sequence in $W^{2,p}.$ Then we can find a $\tilde{f}_2\in W^{2,p}$ such that 
\begin{equation}\label{fw2p2}
\lim\limits_{i\rightarrow\infty}\| f_2^i-\tilde{f}_2\|_{W^{2,p}}=0,
\end{equation}
thus $\tilde{f}_2=f_2$ and $f=(f_1,f_2)\in W^{2,p}\times W^{2,p}.$

Next, we show $\nabla f_1\cdot\nu\big |_{\partial\Omega}=0$ and $\int_\Omega f_1=0.$ Noting the fact that $\nabla f_1^i\cdot\nu \big |_{\partial\Omega}=0$ and applying the trace interpolation inequality \cite[Lemma 2.5]{JinCH-JDE-2020} gives
\begin{equation*}
    \big\|\nabla f_1\cdot\nu\big\|_{L^p(\partial\Omega)}^p = \left\|\nabla f_1\cdot\nu-\nabla f_1^i\cdot\nu\right\|_{L^p(\partial\Omega)}^p \leq \| \nabla (f_1^i - f_1) \|_{L^p(\partial\Omega)}^p \leq c_1 \| f_1^i - f_1 \|_{L^p(\Omega)}^{\frac{p-1}{2}} \| f_1^i - f_1\|_{W^{2,p}(\Omega)}^{\frac{p+1}{2}},
\end{equation*}
which combined with \eqref{fw2p}-\eqref{flp} and $\tilde{f}_1=f_1$ yields $\int_{\partial\Omega} |\nabla f_1\cdot\nu|^pdS=0$ and hence $\nabla f_1\cdot\nu\big |_{\partial\Omega}=0$. By the facts $\int_\Omega f_1^i=0$, \eqref{flp} and $\tilde{f}_1=f_1$, one has 
$$  \bigg|\int_\Omega f_1\bigg|=\bigg|\int_\Omega (f_1^i-f_1)\bigg|\leq \int_\Omega | f_1^i-f_1|\leq |\Omega|^{\frac{p-1}{p}}\|f_1-f_1^i\|_{L^p}\rightarrow 0~\text{as}~i\rightarrow\infty,$$
this gives $\int_\Omega f_1=0.$ Therefore, $f_1\in X_0.$ Similarly, one derives $f_2\in X_0.$ Then, it holds that $f\in D(L_0).$ Finally, by \eqref{fw2p} and $\tilde{f}_1=f_1$ again, we have
\begin{equation}\label{f1-1}
\lim\limits_{i\rightarrow\infty} \|\Delta f_1^i-\Delta f_1\|_{L^p}=0,
\end{equation}
this along with the fact  $\lim\limits_{i\rightarrow\infty}\| \Delta f_1^i-g_1\|_{L^p}=0$ in \eqref{fact2} gives $\Delta f_1=g_1$. 
Following from \eqref{fw2p2} and $\tilde{f}_2=f_2$, one obtains
\begin{equation}\label{f2-1}
\lim\limits_{i\rightarrow\infty}\|\Delta f_2^i-\Delta f_2\|_{L^p}=0.
\end{equation}
Combining \eqref{f1-1} with \eqref{f2-1} yields that 
$$\| (\Delta f_2^i-\chi_2\bar{v}_0\Delta f_1^i) -(\Delta f_2-\chi_2\bar{v}_0\Delta f_1)\|_{L^p}\leq \|\Delta f_2^i-\Delta f_2\|_{L^p}+\chi_2\bar{v}_0\|\Delta f_1^i-\Delta f_1\|_{L^p}\rightarrow 0$$
as $i\rightarrow\infty$. This together with \eqref{fact2} gives $g_2=\Delta f_2-\chi_2\bar{v}_0\Delta f_1.$ Therefore, $g=L_0 f$ and the linear operator $L_0$ is closed.

{\bf Step 2: The linear operator $P_0$ is bounded.} By the $L^p$-estimates for elliptic equations, for any $z\in Y_0$, one has
\begin{equation*}
\|(-d\Delta+J)^{-1}z\|_{W^{2,p}}\leq c_3\|z\|_{L^p},
\end{equation*}
which implies the linear operator $(-d\Delta+J)^{-1}: Y_0\rightarrow X_0\hookrightarrow Y_0$ is bounded. Similarly, we have 
\begin{equation*}
\|\chi_1\lambda \bar{u}_0w_c\Delta (-d\Delta+J)^{-1}z\|_{L^p}\leq c_4\| (-d\Delta+J)^{-1}z\|_{W^{2,p}}\leq c_5\|z\|_{L^p}.
\end{equation*}
This shows that the linear operator $P_0:Y_0\times Y_0\rightarrow Y_0\times Y_0$ is bounded, i.e., 
\begin{equation}\label{P0B*}
\|P_0 f\|_{Y_0\times Y_0}\leq c_6 \|f\|_{Y_0\times Y_0}.
\end{equation}

{\bf Step 3: The linear operator $\mathcal{S}$ is closed.} By the definition of closed operator, it suffices to show that for any $f^i\in D(\mathcal{S})=D(L_0)\subset D(P_0)$, 
\begin{equation*}
\| f^i-f\|_{Y_0\times Y_0}\rightarrow 0,\| \mathcal{S}f^i-g\|_{Y_0\times Y_0}\rightarrow 0~\text{as}~i\rightarrow\infty\Rightarrow f\in D(\mathcal{S})~\text{and}~g=\mathcal{S}f.
\end{equation*}
By $\| \mathcal{S}f^i-g\|_{Y_0\times Y_0}\rightarrow 0~\text{as}~i\rightarrow\infty$ and \eqref{P0B*}, one derives
\begin{equation*}
\begin{split}
\|L_0 f^i-(g-P_0f)\|_{Y_0\times Y_0}
&=\|(\mathcal{S}-P_0)f^i-(g-P_0f)\|_{Y_0\times Y_0}\\
&\leq \| \mathcal{S}f^i-g\|_{Y_0\times Y_0} +\|P_0f^i-P_0f\|_{Y_0\times Y_0}\\
&\leq \| \mathcal{S}f^i-g\|_{Y_0\times Y_0}+c_6\|f^i-f\|_{Y_0\times Y_0}\rightarrow 0,
\end{split}
\end{equation*}
which, along with $D(\mathcal{S})=D(L_0)$ and the fact that $L_0$ is closed, indicates
$$f\in D(L_0)=D(\mathcal{S})~\text{and}~ g-P_0f=L_0f.$$
This shows $g=\mathcal{S}f$, hence the linear operator $\mathcal{S}: D(\mathcal{S})=X_0\times X_0\rightarrow Y_0\times Y_0$ is closed.

{\color{black}Finally, it remains to show that $D(\mathcal{S})$ is dense in $Y_0\times Y_0$. Since $C_c^\infty(\Omega)\cap Y_0\subset X_0$ and $C_c^\infty(\Omega)\cap Y_0$ is dense in $Y_0$, it follows that $X_0$ is dense in $Y_0$. Hence, $D(\mathcal{S})=X_0\times X_0$ is dense in $Y_0\times Y_0$.}
\end{proof}

\begin{lemma}\label{analytic}
The operator $\mathcal S$ generates an analytic $C_0$-semigroup on $Y_0\times Y_0$.
\end{lemma}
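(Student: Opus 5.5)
Our plan is to use the same decomposition $\mathcal S=L_0+P_0|_{X_0\times X_0}$ as in the proof of Lemma \ref{cp}, and to rely on the standard fact that a bounded perturbation of the generator of an analytic semigroup again generates an analytic semigroup (Pazy, Chapter 3, Corollary 2.2). By \eqref{P0B*}, $P_0\in\mathcal L(Y_0\times Y_0)$. It is therefore enough to show that $L_0$ generates an analytic $C_0$-semigroup on $Y_0\times Y_0$.

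We handle $L_0$ by using its triangular structure. Let $A_p:=\Delta$ with domain $X_0\subset Y_0$, the Neumann Laplacian restricted to mean-zero functions. It is classical that the Neumann Laplacian generates an analytic semigroup on $L^p(\Omega)$ for $1<p<\infty$, and $Y_0$ is an invariant closed subspace. Hence $A_p$ generates an analytic semigroup on $Y_0$, its spectrum lies in $(-\infty,-\lambda_1]$ with $\lambda_1>0$ the first nonzero Neumann eigenvalue, and we have the sectorial estimate
\[
\|(\sigma-A_p)^{-1}\|_{\mathcal L(Y_0)}\le \frac{M}{|\sigma|}, \qquad \sigma\in\Sigma_\theta:=\{\sigma\ne0:|\arg\sigma|<\pi-\theta\},
\]
for some $\theta\in(0,\pi/2)$.

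For $\sigma\in\Sigma_\theta$, solving $(\sigma-L_0)(\phi,\psi)=(h_1,h_2)$ gives $\phi=(\sigma-A_p)^{-1}h_1$ and $\psi=(\sigma-A_p)^{-1}\bigl(h_2-\chi_2\bar v_0A_p(\sigma-A_p)^{-1}h_1\bigr)$. The only off-diagonal term is controlled through the identity $A_p(\sigma-A_p)^{-1}=\sigma(\sigma-A_p)^{-1}-I$. This identity gives $\|A_p(\sigma-A_p)^{-1}\|\le M+1$ uniformly on $\Sigma_\theta$. Consequently
\[
\|(\sigma-L_0)^{-1}\|_{\mathcal L(Y_0\times Y_0)}\le \frac{M\bigl(1+\chi_2\bar v_0(M+1)\bigr)+M}{|\sigma|},
\]
so $L_0$ is sectorial. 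Lemma \ref{cp} shows that $L_0$ is closed and densely defined, and the characterization of generators of analytic semigroups via sectorial resolvent bounds (Pazy, Theorem 2.5.2) then yields the claim for $L_0$.

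The main obstacle is expected to be the off-diagonal entry $-\chi_2\bar v_0\Delta$. This entry is unbounded, so it cannot be treated as a perturbation; the triangular resolvent formula above is exactly what absorbs it. A second point to check is that restricting to the mean-zero spaces $Y_0$ is harmless, since $(\sigma-A_p)^{-1}$ preserves zero mean.

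Once $L_0$ is known to be sectorial, adding the bounded operator $P_0$ preserves sectoriality. Explicitly, for $|\sigma|$ large in a slightly smaller sector, a Neumann series gives $(\sigma-\mathcal S)^{-1}=(\sigma-L_0)^{-1}\bigl(I-P_0(\sigma-L_0)^{-1}\bigr)^{-1}$ with the bound $\|(\sigma-\mathcal S)^{-1}\|\le 2C/|\sigma|$. Hence $\mathcal S$ generates an analytic $C_0$-semigroup on $Y_0\times Y_0$, which is the statement of Lemma \ref{analytic}.
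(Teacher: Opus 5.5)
Your proposal is correct and follows essentially the same route as the paper: you decompose $\mathcal S=L_0+P_0$ and prove that $L_0$ is sectorial via its explicit triangular resolvent, using the identity $\Delta(\zeta I-\Delta)^{-1}=\zeta(\zeta I-\Delta)^{-1}-I$ to control the unbounded off-diagonal entry, and then you treat the bounded operator $P_0$ as a perturbation. The only cosmetic difference is the citation: the paper invokes the Engel--Nagel perturbation theorem for $L_0$-bounded operators with bound zero, whereas you cite Pazy's bounded-perturbation corollary (and sketch the Neumann-series resolvent bound), which amounts to the same thing here.
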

\begin{proof}
It is well known that the Neumann Laplacian 
$$\Delta: D(\Delta)=X_0\subset Y_0\rightarrow Y_0$$
is closed and $D(\Delta)$ is dense in $Y_0$. Moreover, $\Delta$ generates a bounded analytic $C_0$-semigroup on $Y_0$. In particular, $\Delta$ is sectorial, i.e., there exist $\theta\in(\pi/2,\pi)$ and $c_0>0$ independent of $\zeta$ such that
\begin{equation}\label{0714}
\Sigma_\theta\subset\rho(\Delta),
\qquad
\|(\zeta I-\Delta)^{-1}\|_{\mathcal L(Y_0)}
\leq \frac {c_0}{|\zeta|}
\quad\text{for all }\zeta\in\Sigma_\theta,
\end{equation}
where $\Sigma_\theta
:=
\{\zeta\in\mathbb C:|\arg\zeta|<\theta\}\setminus\{0\}$, and the set
\begin{equation*}
 \rho(A):=\{ \zeta\in \mathbb{C}\ | \  (\zeta I-A)^{-1}\in \mathcal{L}(Z,Z)\}
\end{equation*}
denotes the resolvent set of $A$, with $Z$ being a Banach space and $A: D(A)\subset Z\rightarrow Z$ being a closed linear operator.
By the proof of Lemma \ref{cp}, $L_0$ and $\mathcal{S}$ are closed and densely defined on $Y_0\times Y_0$. We denote their resolvent sets by $\rho(L_0)$ and $\rho(\mathcal S)$, respectively.  The following proof is divided into three steps.

{\bf Step 1.}
Fix $\zeta\in\Sigma_\theta$ and let $f,g\in Y_0$ be given. We consider the following system
\begin{equation}\label{0708}
\begin{cases}
 (\zeta I-\Delta)U=f, &x\in\Omega,\\
 \chi_2\bar{v}_0\Delta U+(\zeta I-\Delta)V=g, &x\in\Omega.
\end{cases}
\end{equation}
Since $\zeta\in \Sigma_\theta\subset\rho(\Delta),$  the first equation in \eqref{0708} admits the unique solution $U=(\zeta I-\Delta)^{-1}f \in X_0.$ Then $\Delta U\in Y_0$, and the second equation in \eqref{0708} admits the unique solution $$V=(\zeta I-\Delta)^{-1}\bigl(g-\chi_2\bar v_0\Delta U\bigr)\in X_0.$$
Therefore,  the linear operator $\zeta I-L_0:X_0\times X_0\rightarrow Y_0\times Y_0$ is invertible, and we denote its inverse by $(\zeta I-L_0)^{-1}:Y_0\times Y_0\rightarrow  X_0\times X_0$. Furthermore, the standard $L^p$-elliptic estimates yield
$$\|U\|_{W^{2,p}}\leq c_1(\zeta) \|f\|_{L^p},$$
and $$\|V\|_{W^{2,p}}\le c_2(\zeta)\|g-\chi_2\bar v_0\Delta U\|_{L^p}\le c_3(\zeta)\bigl(\|f\|_{L^p}+\|g\|_{L^p}\bigr),$$
which yields that the linear operator $(\zeta I-L_0)^{-1}: Y_0\times Y_0\rightarrow  X_0\times X_0$ is bounded. {\color{black}Here, positive constants $c_i(\zeta)(i=1,2,3)$ are dependent on $\zeta$.} Moreover, this inverse operator is given by
\begin{equation*}
(\zeta I-L_0)^{-1}=\begin{pmatrix}
		(\zeta I-\Delta)^{-1} & 0 \\
		-\chi_2\bar{v}_0(\zeta I-\Delta)^{-1}\Delta (\zeta I-\Delta)^{-1} & (\zeta I-\Delta)^{-1} \\
	\end{pmatrix}.
\end{equation*}

{\bf Step 2.} By Step 1 and the fact that $X_0\times X_0\hookrightarrow Y_0\times  Y_0$ continuously, we derive 
$$(\zeta I-L_0)^{-1}\in \mathcal{L}(Y_0\times Y_0).$$
Using the identity
\begin{equation*}
 \Delta(\zeta I-\Delta)^{-1}=\zeta (\zeta I-\Delta)^{-1}-I,
\end{equation*}
we deduce from  \eqref{0714} that
\begin{equation}\label{0713-1}
\begin{split}
&\| (\zeta I-\Delta)^{-1}g-\chi_2\bar{v}_0(\zeta I-\Delta)^{-1}\Delta (\zeta I-\Delta)^{-1}f\|_{L^p}\\
&=\|(\zeta I-\Delta)^{-1}[g-\chi_2\bar{v}_0\zeta (\zeta I-\Delta)^{-1}f+\chi_2\bar{v}_0f]\|_{L^p}\\
&\leq \frac{c_0}{|\zeta|}(\|g\|_{L^p}+\chi_2\bar{v}_0|\zeta|\frac{c_0}{|\zeta|}\|f\|_{L^p}+\chi_2\bar{v}_0\|f\|_{L^p})\\
&\leq \frac{c_4}{|\zeta|}(\|f\|_{L^p}+\|g\|_{L^p}),
\end{split}
\end{equation}
where $c_4:=c_0(\chi_2\bar{v}_0c_0+\chi_2\bar{v}_0+1)$ is independent of $\zeta$. Combining \eqref{0714} with \eqref{0713-1}, we obtain 
\begin{equation*}
\|(\zeta I-L_0)^{-1}\|_{\mathcal{L}(Y_0\times Y_0)}\leq \frac{c_5}{|\zeta|},
\end{equation*}
where the constant $c_5>0$ is independent of $\zeta$. Hence, the closed linear operator $L_0:D(L_0)=X_0\times X_0\subset Y_0\times Y_0\rightarrow Y_0\times Y_0$ is sectorial. By \cite[Theorem 4.6]{Engel-Nagel-2000}, $L_0$ generates an analytic $C_0$-semigroup on $Y_0\times Y_0$.

{\bf Step 3.} By \eqref{P0B*}, there exists $c_6>0$ such that
$$\|P_0 f\|_{Y_0\times Y_0}\leq c_6 \|f\|_{Y_0\times Y_0}, \quad\forall f\in D(L_0)\subset Y_0\times Y_0.$$
Thus, $P_0: Y_0\times Y_0\rightarrow Y_0\times Y_0$ is $L_0$-bounded with $L_0$-bound zero  (see \cite[Definition 2.1, p.169]{Engel-Nagel-2000}).

Therefore, the analytic-semigroup perturbation theorem (\cite[Theorem 2.10, p.176]{Engel-Nagel-2000}) implies that $\mathcal S$ generates an analytic $C_0$-semigroup on $Y_0\times Y_0$.
\end{proof}

Based on Lemmas \ref{cp}-\ref{analytic}, it follows from \cite[Corollary 3.12, p.281, and (1.7), p.299]{Engel-Nagel-2000} that the linear stability of $(\bar{u}_0,\bar{v}_0,w_c)$ for \eqref{system} is determined by the spectral bound 
\begin{equation*}
s(\mathcal S):=\sup \{\operatorname{Re}\eta:\eta\in \sigma (\mathcal S)\},
\end{equation*}
where $\sigma (\mathcal S)$ denotes the spectrum of the linear operator $\mathcal{S}$.  More precisely, $(\bar{u}_0,\bar{v}_0,w_c)$ is linearly stable if $s(\mathcal S)<0$, and linearly unstable if $s(\mathcal S)>0$.

\vspace{1.5mm}
Now, we show that $\sigma(\mathcal{S})$ consists only of eigenvalues with finite algebraic multiplicity. 
\begin{lemma}\label{spectrum}
Let the linear operator  $\mathcal{S}$ be defined in \eqref{ML}. Then the spectrum of $\mathcal{S}$ consists only of eigenvalues with finite algebraic multiplicity.
\end{lemma}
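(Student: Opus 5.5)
The plan is to show that $\mathcal S$ has compact resolvent. Once that is known, the standard result (e.g. Kato, or \cite[Corollary IV.1.19]{Engel-Nagel-2000}) gives that $\sigma(\mathcal S)$ consists only of isolated eigenvalues with finite algebraic multiplicity.

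Since $\mathcal S$ is closed by Lemma \ref{cp} and generates an analytic semigroup by Lemma \ref{analytic}, the resolvent set $\rho(\mathcal S)$ is nonempty. Concretely, it contains all $\zeta$ with $\operatorname{Re}\zeta$ large; this follows from the sectorial estimate for $\mathcal S$ produced by the perturbation argument in Step 3 of Lemma \ref{analytic}. By the resolvent identity, compactness of $(\zeta I-\mathcal S)^{-1}$ for a single $\zeta\in\rho(\mathcal S)$ implies it for every $\zeta\in\rho(\mathcal S)$. So it suffices to fix one such $\zeta_0$.

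For $\zeta_0$ fixed, the bounded operator $(\zeta_0 I-\mathcal S)^{-1}$ maps $Y_0\times Y_0$ onto $D(\mathcal S)=X_0\times X_0$. I will show it is in fact bounded as a map into $X_0\times X_0$ equipped with the $W^{2,p}$ norm. Take $(\phi,\psi)=(\zeta_0 I-\mathcal S)^{-1}(f,g)$. Then
\[
(\zeta_0 I-\Delta)\phi=f+\chi_1\lambda\bar u_0w_c\Delta(-d\Delta+J)^{-1}(\phi+\psi).
\]
The right-hand side is bounded in $L^p$ by $C(\|f\|_{L^p}+\|\phi\|_{L^p}+\|\psi\|_{L^p})$, by Step 2 of Lemma \ref{cp}. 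The $L^p$ elliptic estimates used in Step 1 of Lemma \ref{analytic} then bound $\|\phi\|_{W^{2,p}}$ by $C(\|f\|_{L^p}+\|g\|_{L^p})$, where we also use boundedness of the resolvent in $\mathcal L(Y_0\times Y_0)$. For $\psi$ we use
\[
(\zeta_0 I-\Delta)\psi=g-\chi_2\bar v_0\Delta\phi .
\]
Its right-hand side is controlled in $L^p$ by the bound just obtained for $\|\phi\|_{W^{2,p}}$, and the same elliptic estimate gives $\|\psi\|_{W^{2,p}}\le C(\|f\|_{L^p}+\|g\|_{L^p})$. An alternative route to the same bound is the closed graph theorem, since $\mathcal S$ is closed and the graph norm is equivalent to the $W^{2,p}$ norm by the estimate in Lemma \ref{cp}.

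The final step is the Rellich--Kondrachov theorem. The embedding $W^{2,p}(\Omega)\hookrightarrow L^p(\Omega)$ is compact on a bounded smooth domain, so $X_0\times X_0\hookrightarrow Y_0\times Y_0$ is compact. Composing it with the bounded map $(\zeta_0 I-\mathcal S)^{-1}:Y_0\times Y_0\to X_0\times X_0$ shows that the resolvent is compact on $Y_0\times Y_0$.

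Compact resolvent then gives the conclusion through Riesz--Schauder theory. Nonzero spectral points of the compact operator $(\zeta_0-\mathcal S)^{-1}$ are eigenvalues with finite-dimensional generalized eigenspaces, accumulating only at $0$. The spectral mapping $\eta\mapsto(\zeta_0-\eta)^{-1}$ transfers this, so $\sigma(\mathcal S)$ consists only of eigenvalues of finite algebraic multiplicity. The main point needing care is the uniform $W^{2,p}$ bound of the resolvent, and in particular the nonlocal term $\Delta(-d\Delta+J)^{-1}$. This term is only bounded on $L^p$, not smoothing, so it must be absorbed as a lower-order perturbation rather than estimated with gain. Doing so requires $\zeta_0$ to be genuinely in $\rho(\mathcal S)$, which is what Lemma \ref{analytic} supplies.
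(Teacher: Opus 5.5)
Your proof is correct and follows essentially the paper's route: show that the resolvent maps $Y_0\times Y_0$ boundedly into $X_0\times X_0$, then conclude compactness from $W^{2,p}(\Omega)\hookrightarrow\hookrightarrow L^p(\Omega)$. The only difference is that the paper gets $\zeta_0\in\rho(\mathcal S)$ and the $W^{2,p}$ bound together from the Neumann-series factorization $\zeta_0 I-\mathcal S=[I-P_0(\zeta_0 I-L_0)^{-1}](\zeta_0 I-L_0)$ with $|\zeta_0|$ large, while you take $\zeta_0\in\rho(\mathcal S)$ from the generation result of Lemma \ref{analytic} and recover the $W^{2,p}$ bound afterwards by your elliptic bootstrap (or the closed graph theorem), which works equally well.
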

\begin{proof}
Choose $\zeta_0\in\Sigma_\theta$ with $|\zeta_0|$ sufficiently large. By Step 2 in the proof of Lemma \ref{analytic}, the linear operator $(\zeta_0 I-L_0)^{-1}$ satisfies
\begin{equation}\label{0713-2*}
\|(\zeta_0 I-L_0)^{-1}\|_{\mathcal{L}(Y_0\times Y_0)}\leq \frac{c_1}{|\zeta_0|},
\end{equation}
where the constant $c_1>0$ is independent of $\zeta_0$. Moreover, it  follows from \eqref{P0B*} that  the linear operator $P_0: Y_0\times Y_0\rightarrow Y_0\times Y_0$ is bounded. Hence, the linear operator $P_0(\zeta_0 I-L_0)^{-1}:  Y_0\times Y_0\rightarrow Y_0\times Y_0$ is bounded, i.e., $P_0(\zeta_0 I-L_0)^{-1}\in \mathcal{L}(Y_0\times Y_0)$. Then we deduce from \eqref{0713-2*} that
\begin{equation*}
\| P_0(\zeta_0 I-L_0)^{-1}f\|_{Y_0\times Y_0}\leq c_2 \|(\zeta_0 I-L_0)^{-1}f\|_{Y_0\times Y_0}\leq  \frac{c_3}{|\zeta_0|}\|f\|_{Y_0\times Y_0},
\end{equation*}
which implies 
$$\| P_0(\zeta_0 I-L_0)^{-1}\|_{\mathcal{L}(Y_0\times Y_0)}\leq \frac{c_3}{|\zeta_0|}<1.$$
Therefore, the Neumann series theorem yields that the bounded operator $I-P_0(\zeta_0 I-L_0)^{-1}:Y_0\times Y_0\rightarrow Y_0\times Y_0$ is invertible, and the inverse operator $[I-P_0(\zeta_0 I-L_0)^{-1}]^{-1}$ is bounded.

A direct computation shows that, on $D(\mathcal{S})=X_0\times X_0$,  
\begin{equation*}
\zeta_0 I-\mathcal{S}=\big[I-P_0(\zeta_0 I-L_0)^{-1}\big](\zeta_0 I-L_0).
\end{equation*}
Since $(\zeta_0 I-L_0)^{-1}: Y_0\times Y_0\rightarrow X_0\times X_0$ and $[I-P_0(\zeta_0 I-L_0)^{-1}]^{-1}:Y_0\times Y_0\rightarrow Y_0\times Y_0$ are bounded, it follows that $\zeta_0 I-\mathcal{S}$ is invertible. Moreover, the inverse is given by
\begin{equation*}
(\zeta_0 I-\mathcal{S})^{-1}=(\zeta_0 I-L_0)^{-1}\big[I-P_0(\zeta_0 I-L_0)^{-1}\big]^{-1}.
\end{equation*}
It maps $Y_0\times Y_0$ into $X_0\times X_0$ and satisfies
$$\| (\zeta_0I -\mathcal{S})^{-1}{f}\|_{X_0\times X_0}\leq c_4 \|{ f}\|_{Y_0\times Y_0}, \quad \forall{ f}\in Y_0\times Y_0.$$
Since $X_0\times X_0\hookrightarrow Y_0\times Y_0$ continuously, we obtain $(\zeta_0 I-\mathcal{S})^{-1}\in \mathcal{L}(Y_0\times Y_0)$. Hence, $\zeta_0\in \rho (\mathcal S)$, and $\rho (\mathcal S)\not=\emptyset$. Moreover, by the compact embedding $X_0\times X_0\hookrightarrow\hookrightarrow Y_0\times Y_0$, we know that
$$
(\zeta_0I-\mathcal S)^{-1}:Y_0\times Y_0\to Y_0\times Y_0
$$
is compact.  Thus,  $\mathcal S: D(\mathcal{S})=X_0\times X_0\subset Y_0\times Y_0\rightarrow Y_0\times Y_0$ has compact resolvent on $Y_0\times Y_0$. This alongside the compact resolvent theory directly finishes the proof of Lemma \ref{spectrum}.
\end{proof}

According to Lemma \ref{spectrum}, the linear stability of $(\bar{u}_0,\bar{v}_0,w_c)$ can be characterized through the eigenvalue problem associated with $\mathcal S$. Hence, it suffices to consider 
\begin{equation}\label{ep0}
\begin{cases}
\Delta \phi+\chi_1\bar{u}_0\lambda w_c\Delta [(-d\Delta+J)^{-1}(\phi+\psi)]=\eta \phi,&x\in\Omega,\\
\Delta\psi-\chi_2\bar{v}_0\Delta\phi=\eta \psi,&x\in\Omega,\\
\nabla\phi\cdot\nu=\nabla\psi\cdot\nu=0, &x\in\partial\Omega,\\
\int_\Omega \phi(x)=\int_\Omega\psi(x)=0.
\end{cases}
\end{equation}
\begin{lemma}\label{linear_PE}
Let $\{\sigma_m\}_{m\geq0}$ be the sequence of eigenvalues of $-\Delta$ under homogeneous Neumann boundary conditions, satisfying $
0=\sigma_0<\sigma_1\leq\sigma_2\leq\cdots,$ and let $y_m(x)$ be a corresponding complete orthonormal basis of $L^2(\Omega)$ consisting of Neumann eigenfunctions.  Define
\begin{equation}\label{Am}
\mathcal{G}_m=
\begin{pmatrix}
-\sigma_m-\dfrac{\chi_1\lambda \bar u_0 w_c\,\sigma_m}
{d\sigma_m+J}
&
-\dfrac{\chi_1\lambda \bar u_0 w_c\,\sigma_m}
{d\sigma_m+J}
\\[3mm]
\chi_2\bar v_0\sigma_m
&
-\sigma_m
\end{pmatrix}.
\end{equation}
{\color{black}Then $\eta$ is an eigenvalue of \eqref{ep0} if and only if $\eta$ is an eigenvalue of $\mathcal{G}_m$ for some $m\in \mathbb{N}^+$.}

\end{lemma}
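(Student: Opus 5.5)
The approach is to diagonalize \eqref{ep0} in the Neumann eigenbasis $\{y_m\}$. Both operators $\Delta$ and $(-d\Delta+J)^{-1}$ are diagonal in this basis, with $\Delta y_m=-\sigma_m y_m$ and $(-d\Delta+J)^{-1}y_m=(d\sigma_m+J)^{-1}y_m$. Hence $\Delta(-d\Delta+J)^{-1}y_m=-\frac{\sigma_m}{d\sigma_m+J}y_m$. The zero-mean constraint removes the mode $m=0$, since $y_0$ is constant and $\int_\Omega\phi=\int_\Omega\psi=0$ forces the $y_0$-coefficients to vanish. So only modes $m\geq1$ appear.

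\textbf{Sufficiency.} Let $\eta$ be an eigenvalue of $\mathcal G_m$ for some $m\ge1$, with eigenvector $(a,b)^{\mathcal T}\neq0$. Set $(\phi,\psi)=(a y_m,b y_m)$. Since $y_m$ is a smooth Neumann eigenfunction with $\sigma_m>0$, it has zero mean and lies in $X_0$. Substituting into \eqref{ep0} and using the diagonal formulas above, the two equations reduce exactly to
\[
\mathcal G_m\begin{pmatrix}a\\ b\end{pmatrix}=\eta\begin{pmatrix}a\\ b\end{pmatrix}
\]
multiplied by $y_m$. Thus $(\phi,\psi)$ is a nontrivial solution of \eqref{ep0}.

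\textbf{Necessity.} Let $(\phi,\psi)\in X_0\times X_0$ be a nontrivial solution of \eqref{ep0}. Since $p>2$ and $\Omega$ is bounded, $\phi,\psi\in L^2(\Omega)$, so we may expand $\phi=\sum a_m y_m$ and $\psi=\sum b_m y_m$, with $a_0=b_0=0$.

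Testing each equation against $y_m$ requires moving the operators onto $y_m$. For $\Delta$ we use Green's formula: both $\phi$ and $y_m$ satisfy homogeneous Neumann conditions and $\phi\in W^{2,p}$, so $\int_\Omega\Delta\phi\, y_m=-\sigma_m a_m$.

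For the nonlocal term, set $z=(-d\Delta+J)^{-1}(\phi+\psi)\in X_0$. Then
\[
\int_\Omega (-d\Delta+J)z\, y_m=(d\sigma_m+J)\int_\Omega z y_m,
\]
again by Green's formula. Since $(-d\Delta+J)z=\phi+\psi$, this gives $\int_\Omega z y_m=(a_m+b_m)/(d\sigma_m+J)$. Consequently $\int_\Omega\Delta z\, y_m=-\sigma_m(a_m+b_m)/(d\sigma_m+J)$.

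Collecting terms, each pair $(a_m,b_m)$ satisfies $\mathcal G_m(a_m,b_m)^{\mathcal T}=\eta(a_m,b_m)^{\mathcal T}$. Nontriviality of $(\phi,\psi)$ and completeness of $\{y_m\}$ guarantee some $m\ge1$ with $(a_m,b_m)\neq0$. Hence $\eta$ is an eigenvalue of $\mathcal G_m$.

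The main obstacle is justifying the mode-by-mode testing of the nonlocal term. This is settled by the symmetric Green's identity applied with $z\in W^{2,p}$ under Neumann boundary conditions. A point to flag is the possible multiplicity of $\sigma_m$: eigenfunctions sharing the same $\sigma_m$ all lead to the same matrix $\mathcal G_m$, so repeated eigenvalues cause no difficulty.
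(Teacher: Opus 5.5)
Your proposal is correct and follows the paper's argument. You expand $\phi,\psi$ in the Neumann eigenbasis and test against $y_m$, which reduces \eqref{ep0} to $\mathcal G_m(a_m,b_m)^{\mathcal T}=\eta(a_m,b_m)^{\mathcal T}$ for some $m\ge1$ with $(a_m,b_m)\neq(0,0)$; conversely, you use the single-mode ansatz $(\phi,\psi)=(a y_m,b y_m)$. Your Green's-identity step, applied with $z=(-d\Delta+J)^{-1}(\phi+\psi)\in X_0$, just makes rigorous what the paper does by substituting the series \eqref{0713pm1}--\eqref{0713pm2} directly.
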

\begin{proof}
By the assumptions in Lemma \ref{linear_PE}, we have
$$
-\Delta y_m=\sigma_m y_m \quad \text{in}\ \Omega;\quad
\nabla y_m\cdot\nu=0\quad\text{on }\partial\Omega.
$$
Since $\{y_m\}_{m\ge0}$ forms a complete orthonormal basis of $L^2(\Omega)$ and $Y_0\subset L^2(\Omega)$, for any $\phi,\psi\in Y_0$, we expand
\begin{equation}\label{0713pm1}
\phi=\sum_{m=0}^\infty a_m y_m,
\quad
\psi=\sum_{m=0}^\infty b_m y_m.
\end{equation}
Then one obtains
\begin{equation}\label{0713pm2}
(-d\Delta+J)^{-1}(\phi+\psi)= \sum_{m=0}^\infty
\frac{(a_m+b_m)}{d\sigma_m+J}
y_m.
\end{equation}

Now, let $\eta$ be an eigenvalue of \eqref{ep0} with an associated
eigenvector
$$
(\phi,\psi)\in D(\mathcal S)=X_0\times X_0,
\qquad
(\phi,\psi)\neq(0,0),
$$
then there exists some $m\in \mathbb{N}^+$ such that $(a_m,b_m)\not=(0,0)$. Substituting \eqref{0713pm1} and \eqref{0713pm2} into \eqref{ep0}, taking the $L^2$-inner product of both equations with $y_m$, and using the orthonormality of $\{y_m\}_{m\ge0}$, we obtain
\begin{equation}\label{0713pm3}
\begin{cases}
\eta a_m
&=-\sigma_m a_m-\frac{\chi_1\lambda\bar u_0w_c\sigma_m}
{d\sigma_m+J}
(a_m+b_m),\\
\eta b_m
&=-\sigma_m b_m+\chi_2\bar v_0\sigma_m a_m.
\end{cases}
\end{equation}
Then \eqref{0713pm3} is equivalent to
$$
\mathcal{G}_m
\binom{a_m}{b_m}=\eta \binom{a_m}{b_m},
$$
where $\mathcal{G}_m$ is defined in \eqref{Am}. Hence $\eta$ is an eigenvalue of $\mathcal{G}_m$.

Now we assume that $\eta$ is an eigenvalue of $\mathcal{G}_m$ for some $m\in\mathbb{N}^+$ with associated eigenvector $(a_m,b_m)\not=(0,0)$. Let $\phi=a_m y_m$ and $\psi=b_m y_m$, then $\int_\Omega \phi=\int_\Omega\psi=0$ and $\nabla\phi\cdot\nu|_{\partial\Omega}=\nabla\psi\cdot\nu|_{\partial\Omega}=0$. Moreover,  we derive
\begin{equation*}
\begin{split}
\Delta \phi+\chi_1\bar{u}_0\lambda w_c\Delta [(-d\Delta+J)^{-1}(\phi+\psi)]=-a_m\sigma_m y_m- \frac{\chi_1\lambda\bar{u}_0w_c\sigma_m}{d\sigma_m+J}(a_m+b_m)y_m=\eta \phi,
\end{split}
\end{equation*}
and 
\begin{equation*}
\Delta\psi-\chi_2\bar{v}_0\Delta\phi=-\sigma_m b_my_m+\chi_2\bar v_0\sigma_m a_my_m=\eta \psi,
\end{equation*}
which implies that $\eta$ is an eigenvalue of \eqref{ep0}. This completes the proof of Lemma \ref{linear_PE}.  
\end{proof}

Based on Lemma \ref{linear_PE}, we have the following linear stability results for \eqref{system}.
\begin{lemma}\label{SPE}
Let $\chi_1,\chi_2,\lambda,\mu,d,\bar{v}_0,\bar{u}_0$ be fixed. Then for all $r>0$, the constant steady state $(\bar u_0,\bar v_0,w_c)$ is linearly stable.
\end{lemma}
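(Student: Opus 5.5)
By Lemmas \ref{cp}, \ref{analytic} and \ref{spectrum}, linear stability of $(\bar u_0,\bar v_0,w_c)$ follows once we show $s(\mathcal S)<0$. By Lemma \ref{linear_PE}, $\sigma(\mathcal S)$ is exactly $\bigcup_{m\ge1}\sigma(\mathcal G_m)$. The plan is therefore to compute the trace and determinant of each $\mathcal G_m$ explicitly. We then apply the Routh--Hurwitz criterion for $2\times2$ matrices, and finally obtain a bound on the real parts that is uniform in $m$.

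Set $\beta_m:=\frac{\chi_1\lambda\bar u_0w_c\sigma_m}{d\sigma_m+J}>0$ for $m\ge1$. Then
\begin{equation*}
\operatorname{tr}\mathcal G_m=-2\sigma_m-\beta_m<0,\qquad
\det\mathcal G_m=(\sigma_m+\beta_m)\sigma_m+\beta_m\chi_2\bar v_0\sigma_m=\sigma_m^2+\beta_m\sigma_m(1+\chi_2\bar v_0)>0.
\end{equation*}
Both signs hold for every $r>0$, since $w_c>0$ only enters through $\beta_m>0$. Hence both eigenvalues of $\mathcal G_m$ have negative real part for every $m\ge1$. This is the structural point: the elliptic reduction removes the $\tau$-dependence that in the fully parabolic case lets the trace change sign.

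The main step is to rule out eigenvalues accumulating at the imaginary axis, that is, to show $\sup_m\max\operatorname{Re}\sigma(\mathcal G_m)<0$ rather than only negativity for each $m$. The eigenvalues are
\begin{equation*}
\eta_m^\pm=\tfrac12\Big(\operatorname{tr}\mathcal G_m\pm\sqrt{(\operatorname{tr}\mathcal G_m)^2-4\det\mathcal G_m}\Big).
\end{equation*}
If they are complex, then $\operatorname{Re}\eta_m^\pm=\frac12\operatorname{tr}\mathcal G_m\le-\sigma_m\le-\sigma_1$. If they are real, the larger one satisfies
\begin{equation*}
\eta_m^+=\frac{2\det\mathcal G_m}{\operatorname{tr}\mathcal G_m-\sqrt{\cdots}}\le\frac{\det\mathcal G_m}{\operatorname{tr}\mathcal G_m},
\end{equation*}
so
\begin{equation*}
|\eta_m^+|\ge\frac{\det\mathcal G_m}{|\operatorname{tr}\mathcal G_m|}=\frac{\sigma_m^2+\beta_m\sigma_m(1+\chi_2\bar v_0)}{2\sigma_m+\beta_m}\ge\frac{\sigma_m}{2}\min\{1,1+\chi_2\bar v_0\}=\frac{\sigma_m}{2},
\end{equation*}
using $\frac{a+b}{c+d}\ge\min\{\frac ac,\frac bd\}$ with $a=\sigma_m^2$, $c=2\sigma_m$, $b=\beta_m\sigma_m(1+\chi_2\bar v_0)$, $d=\beta_m$. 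Since $\sigma_m\ge\sigma_1>0$, every eigenvalue of every $\mathcal G_m$ satisfies $\operatorname{Re}\eta\le-\sigma_1/2$.

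Therefore $s(\mathcal S)\le-\sigma_1/2<0$ for all $r>0$, and the constant steady state is linearly stable.
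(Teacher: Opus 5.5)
Your proof is correct and follows essentially the paper's route: you write the characteristic polynomial of $\mathcal G_m$ via trace and determinant, bound the complex case by the half-trace $\le -\sigma_m$, and bound the larger root in the real case, which gives a bound uniform in $m$. The only difference is the real-case estimate: you rationalize $\eta_m^+=2\det\mathcal G_m/(\operatorname{tr}\mathcal G_m-\sqrt{\cdots})$ and use a mediant inequality to get the weaker but sufficient bound $\operatorname{Re}\eta\le-\sigma_1/2$, whereas the paper uses $\sqrt{g_1^2-4g_2}<g_1$ directly to get $\eta^+\le-\sigma_m$ (also, your $\min\{1,1+\chi_2\bar v_0\}$ should read $\min\{1,2(1+\chi_2\bar v_0)\}$, which does not change the conclusion).
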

\begin{proof}
We deduce from Lemma \ref{linear_PE} that the linear stability of $(\bar u_0,\bar v_0,w_c)$ for \eqref{system} is determined by the eigenvalues of $\mathcal{G}_m$. Calculating directly yields the characteristic equation:
{\color{black}
$$ \eta^2+\sigma_m\big(2+g_1\big)\eta+\sigma_m^2\big(1+g_1+g_2\big)=0,$$
where 
\begin{align*}
g_1:=g_1(r,\sigma_m):=\frac{\chi_1\lambda \bar{u}_0w_c}{\sigma_m d+J}>0,\quad g_2:=g_2(r,\sigma_m):=\frac{\chi_1\chi_2\lambda \bar{u}_0\bar{v}_0w_c}{\sigma_m d+J}>0.\\
\end{align*}
Then some calculations yield
$$\sigma_m^2\big(2+g_1\big)^2-4\sigma_m^2\big(1+g_1+g_2\big)=\sigma_m^2(g_1^2-4g_2).$$
If $\sigma_m^2(g_1^2-4g_2)\leq 0$ for any fixed $m\in \mathbb{N}^+$, then
\begin{equation}\label{0722}
\operatorname{Re}\eta=-\frac{\sigma_m(2+g_1)}{2}\leq -\sigma_m\leq-\sigma_1<0.
\end{equation}
If $\sigma_m^2(g_1^2-4g_2)>0$ for any fixed $m\in \mathbb{N}^+$, then 
\begin{equation}\label{0722-1}
\eta^-:=\frac{\sigma_m}{2}\bigg(-(2+g_1)-\sqrt{g_1^2-4g_2}\bigg)\leq -\sigma_m\leq-\sigma_1<0,
\end{equation}
and
\begin{equation}\label{0722-2}
\begin{split}
\eta^+:=\frac{\sigma_m}{2}\bigg(-(2+g_1)+\sqrt{g_1^2-4g_2}\bigg)\leq \frac{\sigma_m}{2}\big(-(2+g_1)+g_1\big)\leq -\sigma_m\leq-\sigma_1<0.
\end{split}
\end{equation}
Therefore, by \eqref{0722}, \eqref{0722-1}, and
\eqref{0722-2}, for every $m\in\mathbb N^+$ and every
$\eta\in\sigma(\mathcal G_m)$, we have
$$
\operatorname{Re}\eta\leq-\sigma_1<0.
$$
From Lemma \ref{spectrum} and Lemma \ref{linear_PE}, one obtains
$$
s(\mathcal S)
=
\sup_{m\in\mathbb N^+}
\max_{\eta\in\sigma(\mathcal G_m)}
\operatorname{Re}\eta
\leq-\sigma_1<0.
$$
Hence, $(\bar u_0,\bar v_0,w_c)$ is linearly stable
with respect to \eqref{system} for every $r>0$.
}
\end{proof}

Next, we shall show that  the system \eqref{system} does not admit positive non-constant steady state solutions. 

\begin{lemma}\label{NS}
Let $\chi_1,\chi_2,\lambda,\mu,d,\bar{v}_0,\bar{u}_0$ be fixed. Then for all $r>0$,  the system \eqref{system} does not admit any positive non-constant steady state solution.
\end{lemma}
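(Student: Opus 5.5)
Our plan is to exploit the zero-flux structure of the two parabolic equations. A steady state solves
\begin{equation*}
0=\nabla\cdot(\nabla u-\chi_1 u\nabla w),\quad 0=\nabla\cdot(\nabla v-\chi_2 v\nabla u),\quad 0=d\Delta w-\lambda(u+v)w-\mu w+r \quad\text{in }\Omega,
\end{equation*}
with homogeneous Neumann conditions, and we assume $u,v>0$ on $\overline\Omega$.

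First, multiply the $u$-equation by $\ln u-\chi_1 w$ (well defined since $u>0$) and integrate by parts. Since $\nabla u\cdot\nu=\nabla w\cdot\nu=0$, the flux $u\nabla(\ln u-\chi_1 w)$ has zero normal component, and we get
$$\int_\Omega u|\nabla(\ln u-\chi_1 w)|^2=0.$$
Hence $u=c_1e^{\chi_1 w}$ for a positive constant $c_1$. In the same way, testing the $v$-equation with $\ln v-\chi_2 u$ gives $v=c_2e^{\chi_2 u}$.

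The key remaining step, which we expect to be the main obstacle, is to show that $w$ is constant. Substituting the relations above gives the scalar problem
$$d\Delta w=\lambda\bigl(c_1e^{\chi_1 w}+c_2e^{\chi_2c_1e^{\chi_1 w}}\bigr)w+\mu w-r=:f(w),$$
with Neumann boundary conditions, where $f$ is strictly increasing on $[0,\infty)$ because $c_1,c_2,\chi_i,\lambda,\mu>0$. Since $w\ge0$ by \eqref{lw2}, we argue by the maximum principle. Let $w$ attain its maximum at $x_M$ and its minimum at $x_m$; Hopf's lemma handles the boundary case. Then $f(w(x_M))\le0\le f(w(x_m))$. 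Monotonicity forces $w(x_M)\le w(x_m)$, so $w$ is constant.

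A cleaner integral version is also available: multiplying by $\Delta w$ or by $(w-k)_+$ gives
$$-d\int_\Omega|\nabla w|^2=\int_\Omega (f(w)-f(\bar w))(w-\bar w)\ge0,$$
which again yields $\nabla w\equiv0$.

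Once $w$ is constant, $u=c_1e^{\chi_1 w}$ is constant, and then $v=c_2e^{\chi_2 u}$ is constant. By mass conservation \eqref{pb}, the steady state must be $(\bar u_0,\bar v_0,w_c)$. This contradicts non-constancy and proves Lemma \ref{NS} for every $r>0$.
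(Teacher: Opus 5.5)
Your proof is correct and follows the paper's argument. Testing with $\ln u-\chi_1 w$ is equivalent to the paper's substitution $Z_1=Ue^{-\chi_1W}$ (note $u\nabla(\ln u-\chi_1 w)=e^{\chi_1 w}\nabla Z_1$), giving $U=c_1e^{\chi_1W}$, $V=c_2e^{\chi_2U}$ and the scalar Neumann problem $d\Delta W=f(W)$ with $f$ increasing on $[0,\infty)$. The paper concludes with exactly your ``integral version''. That identity comes from testing with $W-\bar W$ and using $\int_\Omega f(\bar W)(W-\bar W)=0$, not from testing with $\Delta w$ or $(w-k)_+$ as you wrote. Your primary maximum-principle/Hopf argument at the extremal points is an equally valid alternative.
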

\begin{proof}
Assume that $(U,V,W)$ is a positive steady state  solution to \eqref{system}. Then it satisfies the following system 
\begin{equation}\label{s-system}
\begin{cases}
0=\Delta U-\chi_1\nabla\cdot(U\nabla W),&x\in\Omega,\\
0=\Delta V-\chi_2\nabla\cdot(V\nabla U),&x\in\Omega,\\
0=d\Delta W-\lambda(U+V)W-\mu W+r,&x\in\Omega,\\
\nabla U\cdot\nu=\nabla V\cdot\nu=\nabla W\cdot \nu=0,&x\in\partial\Omega.\\
\end{cases}
\end{equation}
Let $Z_1:=Ue^{-\chi_1W}>0$, then the $U$-equation can be rewritten as
\begin{equation}\label{ss-1}
\nabla \cdot (e^{\chi_1 W}\nabla Z_1)=0,\ \ x\in\Omega;\quad \nabla Z_1\cdot\nu=0,\ \ x\in\partial\Omega.
\end{equation}
Multiplying \eqref{ss-1} by $Z_1$ and integrating the result by parts, we derive
$$ \int_\Omega e^{\chi_1W}|\nabla Z_1|^2=0,$$
which implies $\nabla Z_1\equiv 0$, hence $Z_1\equiv c_1$ for some  constant $c_1>0$. Then we have
$$U = c_1 e^{\chi_1 W}.$$
Similarly, we find some positive constant $c_2$ such that
\begin{equation*}
     V = c_2 e^{\chi_2 U} = c_2 e^{\chi_2 c_1 e^{\chi_1 W}}.
\end{equation*}
Then the third equation in \eqref{s-system} yields 
\begin{equation}\label{ss-w}
    d\Delta W - f(W)=0, 
\end{equation}
where
$$f(W) := \lambda(c_1 e^{\chi_1 W} + c_2 e^{\chi_2 c_1 e^{\chi_1 W}})W + \mu W - r.$$
Noting that $\int_\Omega (W - \bar{W}) \, = 0$ and $f(\bar{W})$ is a constant, it naturally holds that
\begin{equation}\label{fact0}
    \int_\Omega f(\bar{W})(W - \bar{W})  = 0.
\end{equation}
We multiply \eqref{ss-w} by $W - \bar{W}$, integrate the result by parts over $\Omega$, and use \eqref{fact0} to get
\begin{equation}\label{ss-w2}
\begin{split}
    -d\int_\Omega |\nabla W|^2 &= \int_\Omega f(W)(W - \bar{W}) 
    = \int_\Omega (f(W) - f(\bar{W}))(W - \bar{W}).
\end{split}
\end{equation}
Since $W>0$, a direct computation shows that $f'(W) > 0$. Thus, we have 
$$(f(W) - f(\bar{W}))(W - \bar{W}) \geq 0,$$
then it follows from \eqref{ss-w2} that $\int_\Omega |\nabla W|^2=0$. This implies \(\nabla W\equiv0\), and hence \(W\) is constant in \(\Omega\). Consequently, \(U=c_1e^{\chi_1W}\) and \(V=c_2e^{\chi_2U}\) are also constants. Therefore, no positive non-constant steady state exists.
\end{proof}

\begin{proof}[{\bf Proof of Theorem \ref{NPS}}] Theorem \ref {NPS} follows directly from Lemma \ref {SPE} and Lemma \ref {NS}.
\end{proof}

\bigbreak
\noindent \textbf{Acknowledgment}.
The research of H.Y. Jin was supported by the NSF of China (No. 12371203), Guangdong Major Project of Basic Research (2026B0303000003).

\small{

}
\end{document}